\newtheorem{definition}{Definition}[section]
\newtheorem{secsatz}[definition]{Proposition}
\newtheorem{theorem}[definition]{Theorem}
\newtheorem{remark}[definition]{Remark}
\newtheorem{notation}[definition]{Notation}
\newtheorem{conclusion}[definition]{Conclusion}
\newcommand{\pn}[5]{\big(({#1}^{-#2},#4)^{-#3}, #5 \big)}
\newenvironment{proof}{
    \medskip

    \textbf{Proof:}}{
    \nopagebreak
    \vspace{-2.0ex}
    \begin{flushright}
        \tiny $\blacksquare$
    \end{flushright}
    \smallskip

}
\begin{document}


\title{$\varepsilon$-Equilibria in Quitting Games -- Basics}
\author{Katharina Fischer\footnote{This article appeared under the former family name Heimann in the conference proceedings of the "International Symposium on Dynamic Games and Applications", Wro{\l}aw 2008.  }}
\date{\today}

\maketitle

\begin{abstract}
Quitting games are one of the simplest stochastic games in which at any stage each player has only two possible actions, \emph{continue} and
\emph{quit}. The game ends as soon as at least one player chooses to quit. The players then receive a payoff, which depends on the set of players that did choose to quit.
If the game never ends, the payoff to each player is zero.
\medskip

Examples of quitting games were studied first by Flesh, Thuijsman and Vrieze in 1997 (\cite{Flesh}). Solan  1999 (\cite{Sol}) proved that all three-player quitting games have approximate equilibria.
In the paper \textit{Quitting Games} Solan and Vieille 2001 (\cite{SolVie-QG}) proved the existence of subgame-perfect approximate equilibria under some restrictions on the payoff function.
Furthermore Solan and Vieille studied in \cite{SolVie-Bsp} a four-player quitting game example in which the simplest equilibrium strategy is periodic with period two.
In \textit{The structure of non-zero-sum stochastic games} (\cite{Simon}) Simon showed under which properties quitting games have approximate equilibria among other things by generalization of the solution-idea from Solan an Vieille.
\medskip

This paper gives a short introduction into the topic quitting games and tries to illustrate several properties with examples. First the mathematical model of a quitting game is presented. After the definition of the strategy and strategy profile the corresponding probability space and the underlying stochastic process are stated. This leads to the expected payoff and the definition of some equilibria.

For a better analysis of quitting games the so called one-step game is introduced in the second part of this paper. Important properties of strategy profiles in one-step games are posted and proved.

In the third section an important theorem from Solan and Vieille (cf. \cite{SolVie-QG}) is cited, in which the existence of approximate equilibria, under some assumptions to the payoff function, is postulated. It's proof is divided into three parts, however this paper concentrates only on the first one. In the referred literature only a few steps of the proof are denoted. It is the aim to show the proof at length under usage of the then known results.
\end{abstract}
\bigskip

\section{The model}

A quitting game is a sequential $N$-player ($N \in \mathds{N}$) game and played as follows. In every game turn each player has only two possible actions \textit{continue} and \textit{quit}. The game ends as soon as at least one of the $N$-players chooses to quit. We denote $S$ (quitting coalition) as the subset of the players who choose to quit. If $S = \emptyset$ the players receive no payoff and the game continues to the next stage. If $S \ne \emptyset$ each player $n \in \{1, \ldots, N \}$ receives the payoff $r^n_S \in \mathds{R}$ and the game terminates.

\begin{definition}[Quitting Game]
A quitting game is a tuple
\begin{equation}
G = (\mathcal{N}, (r_S)_{\emptyset \subseteq S \subseteq \mathcal{N}})
\end{equation}
where
\begin{itemize}
    \item[--]   $\mathcal{N} = \{1, \ldots, N\} \subset \mathds{N}$ is a finite set of
                players, $N \in \mathds{N}$,
    \item[--]   $ S \in \mathcal{P}( \mathcal{N})$ denotes the quitting coalition and
    \item[--]   $(r_S)_{S \in \mathcal{P}(\mathcal{N})} \in \mathds{R}^N$ is a sequence
                of payoff-vectors to the players under the quitting coalition $S$
                with $r_{\emptyset} = \underline{0}$ ($\underline{0}:=(0,\ldots, 0)^T \in \mathds{R}^N$) and $r_S = (r_S^1, \ldots, r_S^N)^T$.
\end{itemize}
\end{definition}

\begin{remark}\label{Bem1}
A quitting game is a special case of a (stochastic) game, where transition probabilities are even deterministic. For comparison (cf. e.g. \cite{Mojca}):
\begin{itemize}
    \item[--]   The state space is given by $Z := \{S \ | \ \emptyset \subseteq S \subseteq \mathcal{N} \} = \mathcal{P}(\mathcal{N})$.
    \item[--]   The action space is given by $A := \{0, 1\}^N$, where 0 stands for
                \emph{continue} and 1 for \emph{quit}. We denote $a_S=(a^1_S, \ldots, a_S^N)^T$ as element of $A$ with
                \begin{equation}
                a_S^n := \begin{cases}
                    0 & \mbox{ for $n \in \mathcal{N}\setminus S$} \\
                    1 & \mbox{ for $n \in S$}
                \end{cases} \quad \forall n \in \mathcal{N}, \ \emptyset \subseteq S \subseteq \mathcal{N}.
                \end{equation}
    \item[--]   The transition law $t: Z \times A \times Z \rightarrow [0,1]$ is given by
                \begin{eqnarray}
                \nonumber
                t(z | \emptyset , a_S) := \begin{cases}
                                                1 & \mbox{ for $z = S$}\\
                                                0 & \mbox{ otherwise}
                                            \end{cases} \\
                t(z | \tilde{S}, a_S) := \begin{cases}
                                                1 & \mbox{ for $z = \tilde{S}$}\\
                                                0 & \mbox{ otherwise}
                                             \end{cases}
                \end{eqnarray}
                where $z, S, \tilde{S} \in Z$, $\tilde{S} \neq \emptyset$ and $a_S \in A$.
    \item[--]   The payoff function is given by $\tilde{r} : A \rightarrow
                \mathds{R}^N$, $a_S \mapsto \tilde{r}(a_S) := r_S$.
    \item[--]   There is no discounting in this model.
\end{itemize}
\end{remark}


\textbf{Example 1:}
A typical way to describe two- or three-player quitting games is in a matrix. For example let two players be given. Player one is the so called row player and player two the column player.
\begin{center}
\begin{tabular}{cc|c|c|}\hhline{~~--}
& & \multicolumn{2}{c|}{Player 2} \\
& &  continue & quit \\
\hline
\multicolumn{1}{|c}{Player 1} & \begin{tabular}{c}
             continue \\ quit
           \end{tabular} & \begin{tabular}{c}
                               $\circlearrowleft$ \\
                               $(\ 1 \ ,  -1 \ )$
                            \end{tabular} & \begin{tabular}{c}
                                              $(\ 1 \ ,\ 1 \ )$\\
                                              $( -2 \ , - 2 \ )$
                                            \end{tabular}\\
                                            \hline
\end{tabular}
\end{center}
Where $\circlearrowleft$ means, that the players does not receive any payoff and the game continues to the next round.

In this case the quitting game is given by
\begin{equation*}
G = \left(\{1, 2\}, \left( r_{\emptyset}= \left(\begin{array}{c}
                                            0 \\
                                            0
                                          \end{array}\right)
, r_{\{1\}} = \left(\begin{array}{r}
                                             1 \\
                                             -1
                                           \end{array}\right), r_{\{2\}} = \left(\begin{array}{c}
                                                                      1 \\
                                                                      1
                                                                    \end{array}\right), r_{\mathcal{N}} = \left(\begin{array}{c}
                                                                    -2 \\
                                                                    -2
                                                                    \end{array}\right)\right)\right).
\end{equation*}
\medskip

For further analysis the term ``strategy profile'' is needed.

\begin{definition}[strategy profile, strategy]
Let $G = (\mathcal{N}, (r_S)_{\emptyset \subseteq S \subseteq \mathcal{N}})$ be a given quitting game. A sequence of probability vectors $\pi:=(p_i)_{i \in \mathds{N}}$ with $p_i= (p_i^1, \ldots, p_i^N)^T \in [0,1]^N$ is called strategy profile in the quitting game $G$ for the players $1, \ldots, N$. $p_i^n$ stands for the probability that player $n$ will play the action \emph{quit} at stage $i$. The sequence $\pi^n := (p^n_i)_{i \in \mathds{N}}$ is called strategy for player $n$, $n \in \mathcal{N}$.

Let $\Pi$ be the set of all strategy profiles for the given quitting game.
\end{definition}

\begin{definition}[subgame profile]
Let $G = (\mathcal{N}, (r_S)_{\emptyset \subseteq S \subseteq \mathcal{N}})$ be a given quitting game and $\pi = (p_i)_{i \in \mathds{N}}$ a strategy profile in $G$. For each $j \in \mathds{N}$, $\pi_j := (p_i)_{j \leq i \in \mathds{N}}$ denotes the subgame profile induced by $\pi$ in the quitting game starting at time $j$.
\end{definition}

\begin{definition}[pure, cyclic, stationary]
Let $\pi = (p_i)_{i \in \mathds{N}}$ be a strategy profile in a quitting game $G$. A strategy $\pi^n = (p^n_i)_{i \in \mathds{N}}$ for player $n$ is called
\begin{itemize}
    \item[--]   pure, if $p^n_i \in \{0, 1\}$ for all $i \in \mathds{N}$.
    \item[--]   cyclic, if a $k_0 \in \mathds{N}$ exists such that $p^n_k = p^n_{k + k_0}$ for every $k \in \mathds{N}$.
    \item[--]   stationary, if $p^n_k = p^n_1$ for all $k \in \mathds{N}$.
\end{itemize}
A strategy profile $\pi$ is called pure, if all strategies $\pi^n$, $n \in \mathcal{N}$, are pure. It is cyclic, if all strategies are cyclic, and stationary, if all strategies are stationary.
\end{definition}

\begin{notation}\label{Notation1}
Let $\pi = (p_i)_{i \in \mathds{N}}$ be a strategy profile and $\tilde{\pi}^n = (\tilde{p}^n_i)_{i \in \mathds{N}}$ an alternative strategy for player $n$, $n \in \mathcal{N}$. We denote by $\pi^{-n}$ the strategy profile for the players $j \in \mathcal{N}\setminus \{n\}$ and by $(\pi^{-n}, \tilde{\pi}^n)$ an alternative strategy profile for player $n$ in which the players $j \in \mathcal{N}\setminus \{n\}$ carry on playing $\pi^{-n}$, that means
\begin{equation*}
\pi^{-n} := \left(\begin{array}{llccc}
              p^1_1 & p^1_2 & \ldots  \\
              \vdots & \vdots &   \\
              p^{n-1}_1 & p^{n-1}_2 & \ldots  \\
              p^{n+1}_1 & p^{n+1}_2 & \ldots  \\
              \vdots & \vdots &  \\
              p^N_1 & p^N_2 & \ldots
            \end{array}\right) \quad \mbox{ and } \quad (\pi^{-n}, \tilde{\pi}^n) := \left(\begin{array}{llccc}
              p^1_1 & p^1_2 & \ldots  \\
              \vdots & \vdots &   \\
              p^{n-1}_1 & p^{n-1}_2 & \ldots  \\
              \tilde{p}^n_1 & \tilde{p}^n_2 & \ldots \\
              p^{n+1}_1 & p^{n+1}_2 & \ldots  \\
              \vdots & \vdots &  \\
              p^N_1 & p^N_2 & \ldots
            \end{array}\right).
\end{equation*}
\end{notation}

\subsection{The underlying stochastic process}

Let $G = (\mathcal{N}, (r_S)_{\emptyset \subseteq S \subseteq \mathcal{N}})$ be the given quitting game, $Z = \{S \ | \ \emptyset \subseteq S \subseteq \mathcal{N} \}$ the corresponding state space and $A = \{ 0, 1 \}^N$ the corresponding action space (cf. remark \ref{Bem1}). Set
\begin{equation*}
\Omega := (Z \times A)^{\infty}
\end{equation*}
and
\begin{equation*}
\mathcal{A} := \mathcal{P}(Z) \otimes \mathcal{P}(A) \otimes \mathcal{P} (Z)\otimes \mathcal{P}(A) \otimes \ldots.
\end{equation*}
Furthermore and without loss of generality let $z= \emptyset$ be the initial state. If a strategy profile $\pi = (p_i)_{i \in \mathds{N}} \in \Pi$ is given, a unique probability measure $\mathds{P}_\pi$ on $(\Omega, \mathcal{A})$ and a stochastic process $(X_k, Y_k)_{k \in \mathds{N}}$ with values in $(Z \times A)$ exist, where

\begin{itemize}
    \item[--]   $X_k(\omega) = X_k\big(( z_1, a_1, z_2, a_2, \ldots ) \big) := z_k$

                ($X_k$ denotes the random state of the system at time $k$, $k \in \mathds{N}$, $\omega \in \Omega$),
    \item[--]   $Y_k(\omega) = Y_k\big(( z_1, a_1, z_2, a_2, \ldots ) \big) := a_k$

                ($Y_k$ denotes the random action taken at time $k$, $k \in \mathds{N}$, $\omega \in \Omega$),

    \item[--]   $H_k := (X_1, Y_1, \ldots, X_k)$,

                that means $H_k(\omega) = H_k \big(( z_1, a_1, z_2, a_2, \ldots ) \big) = (z_1, a_1, z_2, a_2, \ldots, z_k)$

                ($H_k$ describes the random history at time $k$, $k \in \mathds{N}$, $\omega \in \Omega$)
\end{itemize}
hold and $\mathds{P}_{\pi}$ is defined by
\begin{itemize}
    \item[--]   $\mathds{P}_{\pi}(X_1 = \emptyset) := 1$,
    \item[--]   $\mathds{P}_{\pi}(X_{k+1} = z| H_k = (z_1, a_1, \ldots, z_k), Y_k = a)
                := t(z | z_k, a)$

                if \ $\mathds{P}_{\pi}( H_k = (z_1, a_1, \ldots, z_k), Y_k = a ) > 0$ and
    \item[--]   $\mathds{P}_{\pi}(Y_k = a| H_k = (z_1, a_1, \ldots, z_k)) :=
                \prod\limits_{\{n \in \mathcal{N}: a^n = 1\}} p_k^n  \prod\limits_{\{m \in \mathcal{N}: a^m = 0\}} (1-p_k^m)$

                if \ $\mathds{P}_{\pi}( H_k = (z_1, a_1, \ldots, z_k)) > 0$
\end{itemize}
with $z, z_i \in Z$ for all $i = 1 \ldots, k$ and $a, a_i \in A$ for all $i = 1, \ldots, k-1$.
\bigskip

Equivalently, $\mathds{P}_{\pi}$ can also be described as the unique probability measure on $(\Omega, \mathcal{A})$ for which
\begin{itemize}
    \item[--]   $\mathds{P}_{\pi}(X_1 = \emptyset) := 1$ and
    \item[--]   $\mathds{P}_{\pi}(H_k = (z_1, a_1, z_2, a_2, \ldots, z_k))$

                $:=
                \mathds{P}_{\pi}(X_1 = z_1)\prod\limits_{i = 1}^{k-1} t(z_{i+1}|z_i, a_i) \cdot \big( \prod\limits_{\{n \in \mathcal{N}: a_i^n = 1\}} p_i^n  \prod\limits_{\{m \in \mathcal{N}: a_i^m = 0\}} (1-p_i^m)\big)$,

                where $z_i \in Z$ for all $i = 1, \ldots, k$ and $a_i \in A$ for all $i = 1, \ldots, k-1$.
\end{itemize}

\subsection{Expected payoffs and equilibria}

In this section the expected payoff for a given quitting game will be defined and additionally the terms \emph{Nash-equilibria}, \emph{$\varepsilon$-equilibria} and \emph{approximate equilibria}.
\medskip

In $\cite{SolVie-QG}$ the expected payoff for a quitting game $G = (\mathcal{N}, (r_S)_{\emptyset \subseteq S \subseteq \mathcal{N}})$ and the strategy profile $\pi \in \Pi$ is defined with a stopping time $\tau: \Omega \rightarrow \mathds{N}\cup\{+\infty\}$ where
\begin{equation*}
\tau(\omega) := \inf\big\{k \in \mathds{N} : Y_k(\omega) \in A\setminus \{(0, \ldots, 0)^T\} \big\}
\end{equation*}
concerning the filtration $(\mathfrak{A}_k)_{k\in \mathcal{N}}$ with $\mathfrak{A}_k := \sigma\{Y_i : 1 \leq i \leq k \}$.

\begin{definition}[Expected payoff]
Let $G = (\mathcal{N}, (r_S)_{\emptyset \subseteq S \subseteq \mathcal{N}})$ be a quitting game and $\pi \in \Pi$ the chosen strategy profile. The expected payoff of the game is given by
\begin{equation*}
\gamma(\pi) := \mathds{E}_{\pi}(\tilde{r}(Y_{\tau})  \mathds{1}_{\{\tau< \infty\}})
\end{equation*}
with $\tilde{r}$ from remark \ref{Bem1}, $\gamma(\pi) = (\gamma^1(\pi), \ldots,  \gamma^N(\pi))^T$ and $\mathds{E}_{\pi}$ as expected value with respect to the probability measure $\mathds{P}_{\pi}$.
\end{definition}



With use of the definition of $\mathds{P}_{\pi}$ and $\tilde{r}(\underline{0}) = r_{\emptyset} = \underline{0}$ one obtains
\begin{eqnarray}
\nonumber \gamma(\pi) & = & \mathds{E}_{\pi}(\tilde{r}(Y_{\tau})  \mathds{1}_{\tau< \infty})\\
\nonumber & = & \sum\limits_{k \in \mathds{N}} \sum\limits_{a_k \in A} \mathds{P}_{\pi} (\tau = k, Y_k = a_k) \cdot \tilde{r}(a_k)\\
\nonumber & = & \sum\limits_{k \in \mathds{N}} \Big( \sum\limits_{a_k \in A} \mathds{P}_{\pi}\big(H_{k-1} = (\emptyset, \underline{0}, \emptyset, \underline{0}, \ldots, \emptyset), Y_k = a_k  \big) \cdot \tilde{r}(a_k) \Big)\\
\nonumber & = & \sum\limits_{k \in \mathds{N}} \Big( \prod\limits_{i=1}^{k-1} \prod\limits_{n \in \mathcal{N}} (1-p_i^n) \cdot \sum\limits_{a_k \in A}  \tilde{r}(a_k) \cdot \prod\limits_{\{n \in \mathcal{N}: a_k^n = 1\}} p_k^n  \prod\limits_{\{m \in \mathcal{N}: a_k^m = 0\}} (1-p_k^m)\Big)
\end{eqnarray}

and with remark \ref{Bem1} follows
\begin{equation*}
\gamma(\pi)
 =  \sum\limits_{k \in \mathds{N}} \Big( \prod\limits_{i=1}^{k-1} \prod\limits_{n \in \mathcal{N}} (1-p_i^n) \cdot \sum\limits_{S \in \mathcal{P}(\mathcal{N})}r_S \cdot \prod\limits_{n \in S} p_k^n  \prod\limits_{m \in \mathcal{N}\setminus S} (1-p_k^m)\Big).
\end{equation*}

\medskip

\begin{definition}\emph{\textbf{($\varepsilon$-equilibrium, Nash-equilibrium, approximate equilibria)}}
Let $G = (\mathcal{N}, (r_S)_{\emptyset \subseteq S \subseteq \mathcal{N}})$ be a quitting game. A strategy profile $\pi = (p_i)_{i \in \mathds{N}}$ is called $\varepsilon$-equilibrium  ($\varepsilon \geq 0$) if for every player $n \in \mathcal{N}$ and every strategy $\tilde{\pi}^n$ of player $n$
\begin{equation}
\gamma^n(\pi) \geq \gamma^n((\pi^{-n},\tilde{\pi}^n)) - \varepsilon
\end{equation}
holds.
\smallskip

The strategy profile $\pi = (p_i)_{i \in \mathds{N}}$ is called Nash-equilibrium or $(0-)$equilibrium if $\pi$ is an $\varepsilon$-equilibrium for $\varepsilon = 0$.
\smallskip

A game has got approximate equilibria, if for all $\varepsilon > 0$ an $\varepsilon$-equilibrium exists.
\end{definition}

\begin{definition}[subgame $\varepsilon$-equilibrium]
Let $G = (\mathcal{N}, (r_S)_{\emptyset \subseteq S \subseteq \mathcal{N}})$ be a quitting game. A strategy profile $\pi = (p_i)_{i \in \mathds{N}}$ is called subgame $\varepsilon$-equilibrium ($\varepsilon \geq 0$) if for all $j \in \mathds{N}$ the subgame profile $\pi_j$ is also an $\varepsilon$-equilibrium in $G$.
\end{definition}
\medskip

\section{One-step game}

The consideration of so called one-step games is an instrument for analyzing quitting games. These games are also known as one-stage games (\cite{Simon}, p. 15) or as one-shot game (\cite{SolVie-QG}, p. 269).

\begin{definition}[One-step game]
Let $G = (\mathcal{N}, (r_S)_{\emptyset \subseteq S \subseteq \mathcal{N}})$ be a given quitting game. For every $v \in \mathds{R}^N$ the tuple
\begin{equation*}
\Gamma_v := (G, v) = \big((\mathcal{N}, (r_S)_{\emptyset \subseteq S \subseteq \mathcal{N}}\big), v)
\end{equation*}
denotes the one-step game corresponding to the quitting game $G$, where the players receive the payoff $v$ if $S = \emptyset$ and $r_S$ otherwise  ($\emptyset \neq S \subseteq \mathcal{N}$).
\end{definition}

A one-step game has only one stage. The transition law, the state and action space are the same as the transition law, the state and action space of the quitting game (cf. remark \ref{Bem1}). For the payoff function $\tilde{r}_v: A \rightarrow \mathds{R}^N$
\begin{equation*}
a \mapsto \tilde{r}_v(a):= \begin{cases}
                                v & \mbox{ if $a = \underline{0}$} \\
                                r_{\{n \in \mathcal{N} | a^n = 1\}} & \mbox{ otherwise.}
                                \end{cases}
\end{equation*}
\smallskip

\begin{definition}[Strategy profile, strategy in the one-step game]
Let $\Gamma_v = (G, v)$ be a given one-step game. A vector $p = (p^1, \ldots, p^N)^T \in [0, 1]^N$ is called strategy profile for the one-step game $\Gamma_v$, where $p^n$ stands for the probability that player $n$ will play the action \emph{quit}. $p^n$ denotes the strategy for player $n$, $n \in \mathcal{N}$, in the one-step game $\Gamma_v$.
\end{definition}

\begin{notation}
Let $p \in [0, 1]´^N$ be a strategy profile for a one-step game $\Gamma_v$ and $\tilde{p}^n$ a strategy for player $n$. Similar to notation \ref{Notation1}, $p^{-n}$ denotes the strategy profile for the players $j \in \mathcal{N} \setminus \{n\}$ and $(p^{-n}, \tilde{p}^n)$ an alternative strategy profile for player $n$ in which the players $j \in \mathcal{N} \setminus \{n\}$ carry on playing $p^{-n}$, that means
\begin{equation*}
p^{-n}:= \left(\begin{array}{l}
                 p^1 \\
                 \vdots \\
                 p^{n-1} \\
                 p^{n+1} \\
                 \vdots \\
                 p^N
               \end{array}\right) \quad \mbox{ and } \quad (p^{-n}, \tilde{p}\,^n):=\left(\begin{array}{l}
                                                                    p^1 \\
                                                                    \vdots \\
                                                                    p^{n-1} \\
                                                                    \tilde{p}\,^n \\
                                                                    p^{n+1} \\
                                                                    \vdots \\
                                                                    p^N
                                                                    \end{array}\right).
\end{equation*}
\end{notation}

Let $\Gamma_v = (G, v)$ and $p \in [0, 1]^N$ be given. Without loss of generality the game starts in the state $z = \emptyset$. The corresponding probability space $(\bar{\Omega}, \bar{\mathcal{A}}, \mathbf{P}_{p})$ and the stochastic process $(\bar{X}_1, \bar{Y}_1, \bar{X}_2)$ are defined by
\begin{itemize}
    \item[--]   $\bar{\Omega} := Z \times A \times Z$,
    \item[--]   $\bar{\mathcal{A}} := \mathcal{P}(Z) \otimes \mathcal{P}(A) \otimes
                \mathcal{P}(Z)$,
                \bigskip

    \item[--]   $\bar{X}_i(\omega) = \bar{X}_i((z_1, a, z_2)) := z_i, \, i = 1, 2$,
    \item[--]   $\bar{Y}_1(\omega) = \bar{Y}_1 ((z_1, a, z_2)) := a$,
                \bigskip

    \item[--]   $\mathbf{P}_{p}(\bar{X}_1 = \emptyset) := 1$,

                $\mathbf{P}_{p}(\{\omega\}) = \mathbf{P}_{p}((z_1, a, z_2))$

                \hspace{3.7em} $:=
                \mathbf{P}_{p}(\bar{X}_1 = z_1) \cdot t(z_2 | z_1, a) \prod\limits_{\{n \in \mathcal{N}: a^n = 1\}} p^n \prod\limits_{\{m \in \mathcal{N}: a^m = 0\}} (1-p^m)$,
\end{itemize}
with $\omega \in \bar{\Omega}$, $z_1, z_2 \in Z$, $a \in A$.
\medskip

The expected payoff $\gamma_v$ for the one-step game $\Gamma_v$ under the strategy profile $p \in [0,1]^N$ is given by
\begin{eqnarray*}
\gamma_v(p) & := & \mathbf{E}_{p} \big(\tilde{r}_v(Y_1)\big)\\
            & =  & \sum\limits_{a_S \in A} \mathbf{P}_{p}(Y_1 = a_S) \cdot \tilde{r}_v(a_S)\\
            & =  & \mathbf{P}_{p}(Y_1 = a_{\emptyset}) \cdot v + \sum\limits_{S \in \mathcal{P}(\mathcal{N})} \mathbf{P}_{p}(Y_1 = a_S) \cdot r_S
\end{eqnarray*}
where $\gamma_v(p) = (\gamma^1_v(p), \ldots, \gamma^N_v(p))^T$ and $\mathbf{E}_p$ is the expected value with respect to the probability measure $\mathbf{P}_p$. $\gamma^n_v(p)$ is the expected payoff for player $n$ ($n \in \mathcal{N}$) in the one-step game $\Gamma_v$ under the strategy profile $p$.
\medskip

\begin{notation}\label{Notation2}
The function $\varrho: [0, 1]^N \times \mathcal{P}(\mathcal{N}) \rightarrow [0, 1]$,
\begin{equation*}
(p, S ) \mapsto \varrho(p, S ) := \prod\limits_{n \in S} p\,^n \prod\limits_{m \in \mathcal{N}\setminus S} (1-p\,^m),
\end{equation*}
with $p = (p\,^1, \ldots, p\,^N)^T$, denotes the probability that a quitting coalition $S$ or -- equivalent to that -- an action $a_S \in A$ is chosen under the vector $p$.
\end{notation}

With this notation for the expected payoff $\gamma_v$ under the strategy profile $p \in [0,1]^N$ follows
\begin{equation*}
\gamma_v(p) = \varrho(p, \emptyset) \cdot v + \sum\limits_{S \in \mathcal{P}(\mathcal{N})} \varrho( p, S) \cdot r_S.
\end{equation*}

\begin{secsatz}\label{Absch-Erw.wert}
Let $\Gamma_v$ be a given one-step game and $p \in [0, 1]^N$ a strategy profile in $\Gamma_v$. Then for the expected payoff $\gamma_v(p)$
\begin{equation*}
\gamma_v^n(p) \in [- \delta_v, \delta_v]
\end{equation*}
holds for all $n \in \mathcal{N}$, where
\begin{equation}\label{delta}
\delta_v := \max \big\{ \max\limits_{n \in \mathcal{N}} |v^n|, \ \max\big\{|r^n_S| \big| \ S \in \mathcal{P}(\mathcal{N})\big\} \big\}.
\end{equation}
\end{secsatz}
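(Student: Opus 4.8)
The plan is to read off from the closed form of $\gamma_v(p)$ that each coordinate $\gamma_v^n(p)$ is a convex combination of the numbers $v^n$ and $\{r_S^n : \emptyset \neq S \subseteq \mathcal{N}\}$, and then invoke the elementary fact that a convex combination of real numbers never leaves the interval spanned by the extreme values among them. Since by definition of $\delta_v$ every one of these numbers lies in $[-\delta_v, \delta_v]$, the claim follows.

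Concretely, I would start from the formula derived just before the proposition,
\begin{equation*}
\gamma_v^n(p) = \varrho(p, \emptyset)\, v^n + \sum_{S \in \mathcal{P}(\mathcal{N})} \varrho(p, S)\, r_S^n,
\end{equation*}
and first note that the $S = \emptyset$ term in the sum contributes nothing, because $r_\emptyset = \underline{0}$; thus
\begin{equation*}
\gamma_v^n(p) = \varrho(p, \emptyset)\, v^n + \sum_{\emptyset \neq S \subseteq \mathcal{N}} \varrho(p, S)\, r_S^n .
\end{equation*}
The next step is to record that the weights appearing here are the coefficients of a probability distribution: from the definition of $\varrho$ in Notation \ref{Notation2} each $\varrho(p,S) \in [0,1]$, and
\begin{equation*}
\sum_{S \in \mathcal{P}(\mathcal{N})} \varrho(p, S) = \sum_{S \subseteq \mathcal{N}} \prod_{n \in S} p^n \prod_{m \in \mathcal{N}\setminus S} (1-p^m) = \prod_{n \in \mathcal{N}} \big(p^n + (1-p^n)\big) = 1 .
\end{equation*}
(Alternatively one can argue that $\varrho(p,S) = \mathbf{P}_p(Y_1 = a_S)$ and that the events $\{Y_1 = a_S\}$, $S \in \mathcal{P}(\mathcal{N})$, partition $\bar\Omega$, so their probabilities sum to $1$.) Consequently the weights $\varrho(p,\emptyset)$ together with $\varrho(p,S)$ for $\emptyset \neq S \subseteq \mathcal{N}$ are nonnegative and sum to $1$.

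Finally I would conclude: since $\gamma_v^n(p)$ is a convex combination of the values $v^n$ and $r_S^n$ ($\emptyset \neq S \subseteq \mathcal{N}$), and each of these satisfies $|v^n| \le \delta_v$ and $|r_S^n| \le \delta_v$ by the definition \eqref{delta} of $\delta_v$, we get $-\delta_v \le \gamma_v^n(p) \le \delta_v$, using that $a \le x_i \le b$ for all $i$ and $\lambda_i \ge 0$, $\sum_i \lambda_i = 1$ imply $a \le \sum_i \lambda_i x_i \le b$. As this holds for every $n \in \mathcal{N}$, the proposition is proved. There is essentially no hard step here; the only point needing a line of computation is the normalization $\sum_{S} \varrho(p,S) = 1$, which is the factorization of $\prod_{n}(p^n + (1-p^n))$ (or, equivalently, the fact that $\mathbf{P}_p$ is a probability measure).
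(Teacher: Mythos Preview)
Your proof is correct and follows essentially the same approach as the paper: both start from the closed form of $\gamma_v^n(p)$, bound each term by $\delta_v$ in absolute value, and use that the weights $\varrho(p,S)$ sum to $1$. You phrase this as a convex-combination argument while the paper writes out the two inequalities explicitly, but the content is identical.
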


\begin{proof}
For all $p \in [0,1]^N$ and all $n \in \mathcal{N}$
\begin{eqnarray*}
\gamma_v^n(p)
& = &   \varrho(p, \emptyset) \cdot v^n + \sum\limits_{S \in \mathcal{P}(\mathcal{N})\setminus \{\emptyset\}} \varrho(p, S) \cdot r^n_S \\
& \leq & \varrho(p, \emptyset) \cdot |v^n| + \sum\limits_{S \in \mathcal{P}(\mathcal{N})\setminus \{\emptyset\}} \varrho(p, S) \cdot | r^n_S |
\end{eqnarray*}
holds. With $\delta_v$ like in (\ref{delta})
\begin{eqnarray*}
\gamma_v^n(p) & \leq & \varrho(p, \emptyset) \cdot \delta_v + \sum\limits_{S \in \mathcal{P}(\mathcal{N})\setminus \{\emptyset\}} \varrho(p, S) \cdot \delta_v\\
& = &   \delta_v \cdot \big(\varrho(p, \emptyset) + \sum\limits_{S \in \mathcal{P}(\mathcal{N})\setminus \{\emptyset\}} \varrho(p, S) \big)
\end{eqnarray*}
follows on the one hand and on the other
\begin{eqnarray*}
\gamma_v^n(p)
& \geq & - \varrho(p, \emptyset) \cdot |v^n| - \sum\limits_{S \in \mathcal{P}(\mathcal{N})\setminus \{\emptyset\}} \varrho(p, S) \cdot | r^n_S |\\
& \geq&   - \delta_v \cdot \big(\varrho(p, \emptyset) + \sum\limits_{S \in \mathcal{P}(\mathcal{N})\setminus \{\emptyset\}} \varrho(p, S) \big).
\end{eqnarray*}
With
\begin{equation*}
\varrho(p, \emptyset) + \sum\limits_{S \in \mathcal{P}(\mathcal{N})\setminus \{\emptyset\}} \varrho(p, S) = 1,
\end{equation*}
$\gamma_v^n(p) \in [-\delta_v, \delta_v]$ holds for all $p \in [0,1]^N$ and $n \in \mathcal{N}$.
\end{proof}

In order to show that the expected payoff $\gamma_v(p)$ is linear in the strategy $p^n$ of player $n$ for all $n \in \mathcal{N}$ the following proposition is needed.

\begin{secsatz}\label{Wahrscheinlichkeit}
For all $p \in [0,1]^N$, all $S \in \mathcal{P}(\mathcal{N})$ and all $i \in \mathcal{N}$
\begin{equation*}
\varrho (p , S) = p^i \cdot \varrho\big((p^{-i},1), S \big) + (1 - p^i) \cdot \varrho\big((p^{-i}, 0), S \big)
\end{equation*}
holds.
\end{secsatz}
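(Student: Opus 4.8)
The plan is to prove the identity by a direct case distinction on whether $i \in S$, expanding the definition of $\varrho$ from Notation \ref{Notation2} and observing that the factor depending on $p^i$ separates cleanly from the rest of the product. First I would fix $p \in [0,1]^N$, $S \in \mathcal{P}(\mathcal{N})$ and $i \in \mathcal{N}$, and abbreviate the ``$i$-free'' part of the product by
\[
Q \;:=\; \prod_{n \in S \setminus \{i\}} p^n \;\cdot\; \prod_{m \in (\mathcal{N}\setminus S)\setminus\{i\}} (1-p^m),
\]
noting that $Q$ does not involve the $i$-th coordinate at all, hence $Q$ is the same whether we evaluate it at $p$, at $(p^{-i},1)$ or at $(p^{-i},0)$.

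Next I would treat the two cases. If $i \in S$, then by definition $\varrho(p,S) = p^i \cdot Q$, while $\varrho((p^{-i},1),S) = 1 \cdot Q = Q$ and $\varrho((p^{-i},0),S) = 0 \cdot Q = 0$; substituting these into the right-hand side gives $p^i \cdot Q + (1-p^i)\cdot 0 = p^i Q = \varrho(p,S)$. If instead $i \in \mathcal{N}\setminus S$, then $\varrho(p,S) = (1-p^i)\cdot Q$, while $\varrho((p^{-i},1),S) = (1-1)\cdot Q = 0$ and $\varrho((p^{-i},0),S) = (1-0)\cdot Q = Q$; the right-hand side becomes $p^i \cdot 0 + (1-p^i)\cdot Q = (1-p^i)Q = \varrho(p,S)$. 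In both cases the claimed equality holds, which completes the argument.

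There is essentially no obstacle here: the only thing to be careful about is the bookkeeping of which index set each product runs over, and the observation that replacing the $i$-th coordinate of $p$ by $0$ or $1$ affects exactly one factor of the product defining $\varrho(\cdot,S)$ — namely the factor $p^i$ if $i \in S$, or the factor $(1-p^i)$ if $i \notin S$ — while leaving the common factor $Q$ untouched. An alternative phrasing, if one wants to avoid the case split, is to note that for each single coordinate the ``linear interpolation'' identity $x = x\cdot 1 + (1-x)\cdot 0$ and $1-x = x\cdot 0 + (1-x)\cdot 1$ holds, and that $\varrho(p,S)$ is a product over coordinates with each factor being of one of these two forms; but the two-case version is shorter to write out.
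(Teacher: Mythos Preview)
Your proof is correct and follows essentially the same approach as the paper: a case distinction on whether $i \in S$ or $i \in \mathcal{N}\setminus S$, expanding the product defining $\varrho$ and isolating the factor depending on $p^i$. Your introduction of the abbreviation $Q$ for the $i$-free part of the product makes the bookkeeping slightly tidier, but the underlying argument is identical to the paper's.
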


\begin{proof}

Case 1: \quad $i \in S$
\smallskip

Because
\begin{equation*}
\varrho(p, S)
 =    \prod\limits_{n \in S} p^n \prod\limits_{n \in \mathcal{N}\setminus S} (1-p^n)
 =    p^i \cdot  \prod\limits_{n \in S \setminus \{i\}} p^n \prod\limits_{n \in
        \mathcal{N}\setminus S} (1-p^n),
\end{equation*}
with $(p^{-i}, 1)^i = 1$ and $p^n = (p^{-i},1)^n$ for all  $n \in \mathcal{N}\setminus \{i\}$, where $(p^{-i},1)^n$ denotes the $n$-th component of the alternative strategy profile for player $i$,
\begin{eqnarray*}
\varrho(p, S)
& = &   p^i \cdot (p^{-i},1)^i \cdot \prod\limits_{n \in S\setminus\{i\}} (p^{-i}, 1)^n
        \prod\limits_{n \in \mathcal{N}\setminus S} \big(1-(p^{-i},1)^n\big)\\
& = &   p^i \cdot \prod\limits_{n \in S} (p^{-i}, 1)^n \prod\limits_{n \in \mathcal{N}\setminus S} \big(1-(p^{-i},1)^n\big)\\
& = &   p^i \cdot \varrho\big((p^{-i}, 1), S\big)
\end{eqnarray*}
follows.

\medskip

Case 2: \quad $i \in \mathcal{N}\setminus S$
\smallskip

Because
\begin{equation*}
\varrho(p, S)
 =   (1-p^i) \cdot \prod\limits_{n \in S} p^n \prod\limits_{n \in
        \mathcal{N}\setminus (S \cup \{i\})} (1-p^n),
\end{equation*}
with $(p^{-i}, 0)^i = 0$ and $p^n = (p^{-i},0)^n$ for all  $n \in \mathcal{N}\setminus \{i\}$
\begin{eqnarray*}
\varrho(p, S)
& = &   (1- p^i) \cdot \big(1-(p^{-i},0)^i\big) \cdot \prod\limits_{n \in S} (p^{-i}, 0)^n \prod\limits_{n \in \mathcal{N}\setminus (S \cup \{i\})} \big(1-(p^{-i},0)^n\big)\\
& = &   (1 - p^i) \cdot \prod\limits_{n \in S} (p^{-i}, 0)^n \prod\limits_{n \in \mathcal{N}\setminus S} \big(1-(p^{-i},0)^n\big)\\
& = &   (1-p^i) \cdot \varrho\big((p^{-i}, 0), S\big)
\end{eqnarray*}
follows.
\medskip

Because of
$\varrho\big((p^{-i},1), S\big) = 0 \mbox{ for $i \in \mathcal{N}\setminus S$}$
and
$\varrho\big((p^{-i},0), S\big) = 0 \mbox{ for $i \in  S$}$,
case 1 and case 2 imply the proposition.

\end{proof}

\begin{secsatz}\label{Zerlegung}
Let $\Gamma_v$ be a given one-step game. Then for all $p \in [0, 1]^N$ and all $n \in \mathcal{N}$
\begin{equation*}
\gamma_v(p) = \gamma_v((p^{-n}, 0))  + p^n \cdot \big( \gamma_v((p^{-n}, 1)) - \gamma_v((p^{-n}, 0)) \big)
\end{equation*}
holds, that means the expected payoff $\gamma_v(p)$ is linear in the strategy $p^n$ of player $n$ for all $p \in [0, 1]^N$ and all $n \in \mathcal{N}$.
\end{secsatz}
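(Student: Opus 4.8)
The plan is to read off the closed form $\gamma_v(p) = \varrho(p,\emptyset)\cdot v + \sum_{S \in \mathcal{P}(\mathcal{N})} \varrho(p,S)\cdot r_S$ derived just above and to feed it through Proposition~\ref{Wahrscheinlichkeit}. Fix a player $n \in \mathcal{N}$ and a strategy profile $p \in [0,1]^N$. By Proposition~\ref{Wahrscheinlichkeit}, applied with $i = n$, for \emph{every} $S \in \mathcal{P}(\mathcal{N})$ (in particular also for $S = \emptyset$) one has
\begin{equation*}
\varrho(p,S) = p^n \cdot \varrho\big((p^{-n},1),S\big) + (1-p^n)\cdot \varrho\big((p^{-n},0),S\big).
\end{equation*}

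Next I would substitute this identity into the closed form for $\gamma_v(p)$, both in the continuation term $\varrho(p,\emptyset)\cdot v$ and in every summand $\varrho(p,S)\cdot r_S$, and then use that a finite sum of vectors may be regrouped freely. Collecting the terms carrying the factor $p^n$ and those carrying the factor $1-p^n$, and recognising by the definition of $\gamma_v$ that the former add up to $p^n\,\gamma_v((p^{-n},1))$ and the latter to $(1-p^n)\,\gamma_v((p^{-n},0))$, yields
\begin{equation*}
\gamma_v(p) = p^n\,\gamma_v\big((p^{-n},1)\big) + (1-p^n)\,\gamma_v\big((p^{-n},0)\big).
\end{equation*}
Rewriting the right-hand side as $\gamma_v((p^{-n},0)) + p^n\big(\gamma_v((p^{-n},1)) - \gamma_v((p^{-n},0))\big)$ is exactly the asserted identity; and since the profiles $(p^{-n},0)$ and $(p^{-n},1)$ do not involve $p^n$, this exhibits $\gamma_v(p)$ as an affine --- hence, in the sense meant in the statement, linear --- function of the scalar $p^n$.

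The argument is a routine bookkeeping computation, so I do not expect a genuine obstacle. The only point meriting a moment's care is the continuation term $\varrho(p,\emptyset)\cdot v$: one must use that Proposition~\ref{Wahrscheinlichkeit} is stated for \emph{all} $S \in \mathcal{P}(\mathcal{N})$, so the case $S = \emptyset$ is included and the payoff $v$ needs no separate treatment. (It is immaterial whether the sum is taken over $\mathcal{P}(\mathcal{N})$ or over $\mathcal{P}(\mathcal{N})\setminus\{\emptyset\}$, since $r_\emptyset = \underline{0}$.)
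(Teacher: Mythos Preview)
Your proposal is correct and follows essentially the same route as the paper: start from the closed form for $\gamma_v(p)$, apply Proposition~\ref{Wahrscheinlichkeit} to each $\varrho(p,S)$ (including $S=\emptyset$), and regroup into $(1-p^n)\,\gamma_v((p^{-n},0)) + p^n\,\gamma_v((p^{-n},1))$. The paper additionally records the observation $\varrho\big((p^{-n},1),\emptyset\big)=0$ when simplifying the continuation term, but this is only a cosmetic difference in how the regrouping is displayed.
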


\begin{proof}
\begin{equation*}
\gamma_v(p)   =  \varrho(p, \emptyset) \cdot v + \sum\limits_{S \in \mathcal{P}(\mathcal{N})} \varrho( p, S) \cdot r_S
\end{equation*}
With proposition \ref{Wahrscheinlichkeit} and $\varrho\big((p^{-n},1), \emptyset\big) = 0$ one obtains for all $n \in \mathcal{N}$
\begin{eqnarray*}
\gamma_v(p)
& = &   \Big( p^n \cdot \varrho\big((p^{-n},1), \emptyset\big) + ( 1 - p^n)\cdot
        \varrho\big((p^{-n},0), \emptyset\big)\Big) \cdot v \\
&   &   + \sum\limits_{S \in \mathcal{P}(\mathcal{N})} \Big( p^n \cdot \varrho\big((p^{-n},1), S\big) + ( 1 - p^n)\cdot
        \varrho\big((p^{-n},0), S\big)\Big) \cdot r_S\\
& = &   ( 1 - p^n) \cdot \varrho\big((p^{-n},0), \emptyset\big) \cdot v + ( 1 - p^n) \sum\limits_{S \in \mathcal{P}(\mathcal{N})}\varrho\big((p^{-n},0), S\big) \cdot r_S\\
&   &   + \ p^n \sum\limits_{S \in \mathcal{P}(\mathcal{N})} \varrho\big((p^{-n},1), S\big) \cdot r_S \\
\end{eqnarray*}
Furthermore
\begin{equation*}
\gamma_v((p^{-n}, 0))  =  \varrho\big((p^{-n}, 0), \emptyset\big)  \cdot v + \sum\limits_{S \in \mathcal{P}(\mathcal{N})} \varrho((p^{-n},0), S)\cdot r_S
\end{equation*}
and
\begin{equation*}
\gamma_v((p^{-n}, 1))  =  \sum\limits_{S \in \mathcal{P}(\mathcal{N})} \varrho((p^{-n},1), S)\cdot r_S.
\end{equation*}
That implies
\begin{equation*}
\gamma_v(p) = (1- p^n) \cdot \gamma_v((p^{-n}, 0)) + p^n \cdot \gamma_v((p^{-n}, 1)).
\end{equation*}
\end{proof}

\begin{conclusion}\label{Zerlegung2}
Let $p \in [0, 1]^N$ be a strategy profile in the one-step game $\Gamma_v$. The following equations hold:
\begin{enumerate}
	\item	$\gamma_v((p^{-n}, 1)) = p^i \gamma_v\big(\pn{p}{n}{i}{1}{1}\big) + (1- p^i)\gamma_v\big(\pn{p}{n}{i}{1}{0}\big) $
	\item	$\gamma_v((p^{-n}, 0)) = p^i \gamma_v\big(\pn{p}{n}{i}{0}{1}\big) + (1- p^i)\gamma_v\big(\pn{p}{n}{i}{0}{0}\big) $
\end{enumerate}
for all $i, n \in \mathcal{N}$, $i \neq n$.
\end{conclusion}

\begin{definition}\textbf{($\varepsilon$-equilibrium, Nash-equilibrium, approximate equilibria of the}\linebreak \textbf{one-step game)}
Let $\Gamma_v$ be a one-step game corresponding to a quitting game $G$. The strategy profile $p \in [0, 1]^N$ is called an $\varepsilon$-equilibrium for $\varepsilon \geq 0$ if
\begin{equation*}
\forall n \in \mathcal{N} \  \forall \tilde{p}^n \in [0,1] : \gamma^n_v(p) \geq \gamma^n_v \big((p^{-n},\tilde{p}^n)\big) - \varepsilon.
\end{equation*}
If $p$ is an $\varepsilon$-equilibrium with $\varepsilon = 0$, p is also called (Nash-)equilibrium.

A one-step game $\Gamma_v$ has got approximate equilibria, if for all $\varepsilon > 0$ an $\varepsilon$-equilibrium in $\Gamma_v$ exists.
\end{definition}

Because of the linearity of the expected payoff $\gamma_v(p)$ in the strategies $p^n$ ($n \in \mathcal{N}$) it is sufficient to consider the expected payoff only for pure strategies in order to find out whether a given strategy profile in a one-step game is an equilibrium or not, since the extreme values of $\gamma_v(p)$ is for each single player attained in a border point. The following example illustrates this fact.
\bigskip

\textbf{Example 2:} Consider example 1 again. The corresponding one-step game $\Gamma_v$ for a vector $v = (v^1, v^2)^T \in \mathds{R}^2$ is given by
\begin{center}
\begin{tabular}{cc|c|c|}\hhline{~~--}
& & \multicolumn{2}{c|}{Player 2} \\
& &  continue & quit \\
\hline
\multicolumn{1}{|c}{Player 1} & \begin{tabular}{c}
             continue \\ quit
           \end{tabular} & \begin{tabular}{c}
                               $(\ v^1 \ , \ v^2  )$ \\
                               $(\ 1 \ ,  -1 \ )$
                            \end{tabular} & \begin{tabular}{c}
                                              $(\ 1 \ , \ 1 \ )$\\
                                              $(\ -2 \ , -2 \ )$
                                            \end{tabular}\\
                                            \hline
\end{tabular}
\end{center}

Consider four different given $v$'s:
\begin{enumerate}
    \item   $v_1 = \left(\begin{array}{c} 2 \\ 2 \end{array}\right)$:

            The strategy profile $ p = \left(\begin{array}{c} 0.1 \\ 0 \end{array}\right)$ is a 0.1-equilibrium with the expected payoff
            \begin{equation*}
            \gamma_{v_1}(p) = 0.9 \cdot \left(\begin{array}{c} 2 \\ 2 \end{array}\right) + 0.1 \cdot \left(\begin{array}{r} 1 \\ -1 \end{array}\right) = \left(\begin{array}{c} 1.9 \\ 1.7 \end{array}\right).
            \end{equation*}

            Because if player 1 chooses to play \emph{continue} while player 2 keeps on playing \emph{continue} he has got an expected payoff of $\gamma_{v_1}^1((0, 0)^T) = 2$, which is 0.1 better than his expected payoff under $p$. If he plays \emph{quit} his expected payoff would be $\gamma_{v_1}^1((1, 0)^T) = 1$.

            Otherwise
            if player 2 chooses to play \emph{quit} while player 1 keeps on playing \emph{quit} with a probability of 0.1, player 2 gains a payoff of
            \begin{equation*}
            \gamma_{v_1}^2((0.1, 1)^T) = 0.9 \cdot 1 + 0.1 \cdot (-2) = 0.7.
            \end{equation*}
            Player 2 would even change for the worse.
    \item   $v_2 = \left(\begin{array}{c} 0 \\ 2 \end{array}\right)$:

            The strategy profile $ p = \left(\begin{array}{c} 1 \\ 0 \end{array}\right)$ is a Nash-equilibrium with the expected payoff $\gamma_{v_2}(p) = \left(\begin{array}{r} 1 \\ -1 \end{array}\right)$.
            Because if player 1 chooses to play \emph{continue} while player 2 keeps on playing \emph{continue} he has got an expected payoff of $\gamma_{v_2}^1((0, 0)^T) = 0$ and if player 2 chooses to play \emph{quit} while player 1 keeps on playing \emph{quit}, player 2 gains a payoff of $\gamma_{v_2}^2((1, 1)^T) = -2$. Player 2 would even change for the worse, too.
    \item   $v_3 = \left(\begin{array}{c} 2 \\ 0 \end{array}\right)$:

            Analogously to case 2, the strategy profile $ p = \left(\begin{array}{c} 0 \\ 1 \end{array}\right)$ is a Nash-equilibrium with the expected payoff $\gamma_{v_3}(p) = \left(\begin{array}{c} 1 \\ 1 \end{array}\right)$.
    \item   $v_4 = \left(\begin{array}{c} 0 \\ 0 \end{array}\right)$:

            The strategy profile $ p_1 = \left(\begin{array}{c} 1 \\ 0 \end{array}\right)$ is a Nash-equilibrium with the expected payoff \linebreak $\gamma_{v_4}(p_1) = \left(\begin{array}{r} 1 \\ -1 \end{array}\right)$. Analogously $ p_2 = \left(\begin{array}{c} 0 \\ 1 \end{array}\right)$ is also a Nash-equilibrium with the expected payoff $\gamma_{v_4}(p_2) = \left(\begin{array}{c} 1 \\ 1 \end{array}\right)$.
\end{enumerate}
Obviously the choice of $v$ is important. This leads to the question which $v$'s are expedient referring to finding an ($\varepsilon$-)equilibrium in the corresponding quitting game\footnote{Simon therefore introduced in \cite{Simon} the term \emph{feasible} for a vector $v \in \mathds{R}^N$: A vector $v \in \mathds{R}^N$ is feasible if it is in the convex hull of $\{r_S| \emptyset \neq S \subset \mathcal{N} \} \cup \{\underline{0}\}$. } . For example: It does not make sense to choose $v$ like in the first case, because in the corresponding quitting game the expected payoffs are limited by one for each player.
\bigskip

Furthermore proposition \ref{Zerlegung} motivates the definition of the \emph{best reply}, but before stating the definition it is necessary to introduce the mapping $supp$. $supp: [0,1] \rightarrow \mathcal{P}(\{0, 1\})$  denotes the actions that are played with positive probability under $\tilde{p}$, that means
\begin{equation*}
supp(\tilde{p}) := \begin{cases}
\{0\}    & \mbox{ for $\tilde{p} = 0$}\\
\{0, 1\} & \mbox{ for $\tilde{p} \in (0,1)$}\\
\{1\}    & \mbox{ for $\tilde{p} = 1$}
\end{cases}.
\end{equation*}

\begin{definition}[best reply, perfect]
Let $\Gamma_v$ be a given one-step game and $p \in [0,1]^N$ a strategy profile in $\Gamma_v$. An action $b \in \{0, 1\}$ of player $n$ is an $\varepsilon$-best reply for $p^{-n}$ if
\begin{equation*}
\gamma_v^n((p^{-n}, b)) \geq \max\limits_{\tilde{b} \in \{0, 1\}} \gamma_v^n((p^{-n}, \tilde{b})) - \varepsilon
\end{equation*}
$n \in \mathcal{N}$.

A strategy profile $p \in [0,1]^N$ in $\Gamma_v$ is called $\varepsilon$-perfect\,\footnote{Solan an Vieille used in \cite{SolVie-QG} instead of the term ``$\varepsilon$-perfect" the term ``perfect $\varepsilon$-equilibrium". This formulation is confusing with regard to theorem \ref{Theorem-perfect-GW}. The here used phrase is more accurate.}, if for every player $n \in \mathcal{N}$, every action $b \in supp(p^n)$ is an $\varepsilon$-best reply for $p^{-n}$.
\end{definition}

\begin{remark}\label{Bem2}
Let $\Gamma_v$ be the given one-step game and $\varepsilon \geq 0$. The second part of the definition above is equivalent to the following:
\medskip

The strategy profile $p$ for the one-step game $\Gamma_v$ is $\varepsilon$-perfect, if
\begin{equation*}
\forall n \in \mathcal{N} :
\begin{cases}
    \gamma_v^n((p^{-n},1)) - \gamma_v^n((p^{-n},0)) \leq \varepsilon  & \mbox{ for $p^n = 0$}\\
    \gamma_v^n((p^{-n},1)) - \gamma_v^n((p^{-n},0)) \in  [-\varepsilon, \varepsilon]  & \mbox{ for $p^n \in (0, 1)$}\\
    \gamma_v^n((p^{-n},1)) - \gamma_v^n((p^{-n},0)) \geq - \varepsilon  & \mbox{ for $p^n = 1$}
\end{cases}.
\end{equation*}
\end{remark}

Now look at Example 1 again:

\textbf{Example 3:} Consider the one-step game $\Gamma_{v}$ with $v = \left( \begin{array}{c} 0 \\ 2 \end{array} \right)$.
\begin{enumerate}
    \item   $p = \left( \begin{array}{c} 1 \\ 0 \end{array} \right)$:

            $p$ is a Nash-equilibrum in $\Gamma_{v}$, but is $p$ also ($0$-)perfect?

            It holds that
                \begin{equation*}
                p^1 = 1 \ : \quad \gamma_v^1((p^{-1},1)) - \gamma_v^1((p^{-1},0)) = 1 - 0 = 1 \geq 0
                \end{equation*}
                and
                \begin{equation*}
                p^2 = 0 \ :  \quad \gamma_v^2((p^{-2},1)) - \gamma_v^2((p^{-2},0)) = -2 - (-1) = -1 \leq 0.
                \end{equation*}
            So with remark \ref{Bem2} $p$ is perfect.

    \item   $p = \left( \begin{array}{c} 1 \\ 0.1 \end{array} \right)$:

            $p$ is a $0.1$-equilibrium, because:
            \begin{equation*}
            \gamma_v(p) = 0.1 \cdot \left( \begin{array}{c} -2 \\ -2 \end{array} \right) + 0.9 \cdot \left( \begin{array}{r} 1 \\ -1 \end{array} \right) = \left( \begin{array}{r} 0.7 \\ -1.1 \end{array} \right).
            \end{equation*}
            If player one chooses to play \emph{continue}, while player two keeps on playing $p^2$, he gains a payoff of 0.1, which is less than before.

            If player two chooses to play \emph{continue} with certainty, while player one keeps on playing \emph{quit}, he anticipates a payoff of $-1$, which is $0.1$ more than before.

            But is $p$ also $0.1$-perfect?

            The answer is no, because
            \begin{equation*}
            p^2 \in (0.1) \ : \quad \gamma_v^2((p^{-2},1)) - \gamma_v^2((p^{-2},0)) = -2 - (-1) = -1 \notin [-0.1 , \ 0.1].
            \end{equation*}
\end{enumerate}
Which relation exists between ($\varepsilon$-)equilibria strategy profiles and ($\varepsilon$-)perfect strategy profiles ($\varepsilon \geq 0$)?
\medskip

\begin{theorem}\label{Theorem-perfect-GW}
Let $\Gamma_v$ be a given one-step game and $\varepsilon \geq 0$. Then the following propositions hold:
\begin{enumerate}
    \item   $p \in [0,1]^N$ is $\varepsilon$-perfect for $\Gamma_v$ \quad $\Longrightarrow \quad$
            $p$ is an $\varepsilon$-equilibrium in $\Gamma_v$;
    \item   $p \in [0,1]^N$ is an $\varepsilon$-equilibrium in $\Gamma_v$ \quad
            $\Longrightarrow \quad$ $p$ is $\varepsilon \xi_p$-perfect for $\Gamma_v$,

            where
            \begin{equation*}
            \xi_p := \max\limits_{n \in \mathcal{N}} \xi^n_p \quad \textrm{and} \quad \xi^n_p := \begin{cases}
                \max (\frac{1}{p^n}, \frac{1}{1-p^n}) & \mbox{ for $p^n \in (0,1)$} \\
                1 & \mbox{ for $p^n \in \{0, 1 \}$}
                \end{cases}.
            \end{equation*}
\end{enumerate}
\end{theorem}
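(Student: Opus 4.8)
The plan is to reduce both implications to the one-variable affine structure of the payoff supplied by Proposition \ref{Zerlegung}. Fix a player $n$ and abbreviate $D_n := \gamma_v^n((p^{-n},1)) - \gamma_v^n((p^{-n},0))$. Applying Proposition \ref{Zerlegung} in the $n$-th coordinate gives, for every $q \in [0,1]$,
\[
\gamma_v^n((p^{-n},q)) = (1-q)\,\gamma_v^n((p^{-n},0)) + q\,\gamma_v^n((p^{-n},1)) = \gamma_v^n((p^{-n},0)) + q\, D_n ,
\]
and hence, for any alternative $\tilde p^n \in [0,1]$,
\[
\gamma_v^n((p^{-n},\tilde p^n)) - \gamma_v^n(p) = (\tilde p^n - p^n)\, D_n .
\]
So everything reduces to bounding the product of $D_n$ with the displacement $\tilde p^n - p^n \in [-1,1]$, whose sign is constrained by where $p^n$ sits in $[0,1]$.

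For (1) I would assume $p$ is $\varepsilon$-perfect and use the three-case characterization of Remark \ref{Bem2}, showing $(\tilde p^n - p^n)D_n \le \varepsilon$ for every $\tilde p^n$ and every $n$. If $p^n \in (0,1)$ then $|D_n| \le \varepsilon$ and $|\tilde p^n - p^n| \le 1$, so the product is $\le \varepsilon$. If $p^n = 0$ then $\tilde p^n - p^n = \tilde p^n \ge 0$ and $D_n \le \varepsilon$: when $D_n \le 0$ the product is $\le 0 \le \varepsilon$, and when $0 < D_n \le \varepsilon$ it is $\le D_n \le \varepsilon$. The case $p^n = 1$ is symmetric, using $\tilde p^n - p^n \in [-1,0]$ and $D_n \ge -\varepsilon$. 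This yields the $\varepsilon$-equilibrium inequality.

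For (2) I would assume $p$ is an $\varepsilon$-equilibrium and exploit that testing only the extreme deviations $\tilde p^n \in \{0,1\}$ already suffices: $\tilde p^n = 1$ gives $(1-p^n)D_n \le \varepsilon$ and $\tilde p^n = 0$ gives $-p^n D_n \le \varepsilon$. If $p^n = 0$ the first inequality reads $D_n \le \varepsilon = \varepsilon\xi_p^n \le \varepsilon\xi_p$, matching Remark \ref{Bem2}; the case $p^n = 1$ is symmetric via the second inequality. If $p^n \in (0,1)$, dividing the two inequalities by $1-p^n$ and $p^n$ respectively gives $-\varepsilon/p^n \le D_n \le \varepsilon/(1-p^n)$, so $|D_n| \le \varepsilon\max(1/p^n,\,1/(1-p^n)) = \varepsilon\xi_p^n \le \varepsilon\xi_p$, which is exactly the required bound. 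By Remark \ref{Bem2}, $p$ is $\varepsilon\xi_p$-perfect.

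I do not expect a genuine obstacle: once the affine identity is recorded, both parts are a finite sign analysis. The only points needing a little care are the boundary cases $p^n \in \{0,1\}$, where one must check that the convention $\xi_p^n = 1$ there (together with the trivial $\xi_p \ge 1$, since each $\xi_p^m \ge 1$) makes the two claimed bounds coincide with the one-sided inequalities in the characterization of ($\varepsilon$-)perfectness, and keeping track that in (1) one uses $|\tilde p^n - p^n|\le 1$ whereas in (2) one uses the concrete values $\tilde p^n\in\{0,1\}$.
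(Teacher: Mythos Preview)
Your proposal is correct and follows essentially the same route as the paper's proof: both rely on the affine identity of Proposition~\ref{Zerlegung}, the three-case characterization of Remark~\ref{Bem2}, and (for part~2) testing the extreme deviations $\tilde p^n\in\{0,1\}$ and dividing by $p^n$ respectively $1-p^n$. Your packaging via $D_n$ and the displacement formula $(\tilde p^n - p^n)D_n$ is a bit more streamlined than the paper's case split into (a)/(b) for $p^n\in(0,1)$, but the mathematical content is the same.
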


\begin{proof}
\smallskip

\textbf{1.:} Let $p \in [0,1 ]^N$ be $\varepsilon$-perfect for $\Gamma_v$. It is to show that $p$ is also an $\varepsilon$-equilibrium in $\Gamma_v$, that means
\begin{equation*}
\gamma_v^n(p) \geq \max\limits_{\tilde{p} \in [0,1]} \gamma_v^n \big((p^{-n}, \tilde{p})\big) - \varepsilon
\end{equation*}
for all $n \in \mathcal{N}$.

Because of the linearity of $\gamma_v^n(p)$ with respect to $p^n$ (cf. proposition \ref{Zerlegung}) it is sufficient to show that
\begin{equation}\label{toshow1}
\gamma_v^n(p) \geq \max\limits_{\tilde{p} \in \{0,1\}} \gamma_v^n \big((p^{-n}, \tilde{p})\big) - \varepsilon
\end{equation}
for all $n \in \mathcal{N}$.

Since $p$ is $\varepsilon$-perfect in $\Gamma_v$, the inequality (\ref{toshow1}) follows
immediately for $p^n = 0$ and $p^n =1$.

For $p^n \in (0,1)$ it holds either
\begin{itemize}
    \item[(a)]  $\gamma_v^n\big((p^{-n},1)\big) \geq \gamma_v^n(p) \geq \gamma_v^n\big((p^{-n},0)\big)$ or
    \item[(b)]  $\gamma_v^n\big((p^{-n},0)\big) > \gamma_v^n(p) > \gamma_v^n\big((p^{-n},1)\big)$.
\end{itemize}
Case (a): \quad With $p$ $\varepsilon$-perfect in $\Gamma_v$ and remark \ref{Bem2}, it holds
\begin{equation*}
\gamma_v^n(p) \geq \gamma_v^n\big((p^{-n},0)\big) \geq \gamma_v^n\big((p^{-n},1)\big) - \varepsilon.
\end{equation*}
Because $\gamma_v^n\big((p^{-n},1)\big) = \max\limits_{\tilde{p} \in [0, 1]} \gamma_v^n\big((p^{-n}, \tilde{p})\big)$
\begin{equation*}
\gamma_v^n(p) \geq \max\limits_{\tilde{p} \in [0, 1]} \gamma_v^n\big((p^{-n}, \tilde{p})\big) - \varepsilon
\end{equation*}
follows.
\smallskip

Case (b): \quad Analogously to case (a) with $p$ $\varepsilon$-perfect in $\Gamma_v$ and remark \ref{Bem2}
\begin{equation*}
\gamma_v^n(p) > \gamma_v^n\big((p^{-n},1)\big) \geq \gamma_v^n\big((p^{-n},0)\big) - \varepsilon= \max\limits_{\tilde{p} \in [0, 1]} \gamma_v^n\big((p^{-n}, \tilde{p})\big) - \varepsilon
\end{equation*}
follows.

So for both cases (\ref{toshow1}) holds.
\medskip

\textbf{2.:} Let $p$ be an $\varepsilon$-equilibrium, that means for all $n \in \mathcal{N}$ and for all $\tilde{p} \in [0, 1]$
\begin{equation}\label{toshow2}
\gamma_v^n(p) \geq \gamma_v^n\big((p^{-n}, \tilde{p})\big) - \varepsilon
\end{equation}
holds.
That implies
\begin{equation}\label{shown1}
\gamma_v^n(p) \geq  \gamma_v^n\big((p^{-n}, 1)\big) - \varepsilon
\end{equation}
for $p^n = 0$ and
\begin{equation}\label{shown2}
\gamma_v^n(p) \geq  \gamma_v^n\big((p^{-n}, 0)\big) - \varepsilon
\end{equation}
for $p^n = 1$.

Consider now $p^n \in (0,1)$. For all $\tilde{p} \in [0, 1]$
\begin{equation}\label{Zusatz}
\gamma_v^n(p) = p^n \cdot \gamma_v^n\big((p^{-n},1)\big) + (1 - p^n) \cdot \gamma_v^n\big((p^{-n},0)\big) \geq \gamma_v^n\big((p^{-n},\tilde{p})\big) - \varepsilon.
\end{equation}
holds (c.f. proposition \ref{Zerlegung}).

For $\tilde{p} = 1$, with (\ref{Zusatz})
\begin{equation*}
(1- p^n) \cdot \gamma_v^n\big((p^{-n},0)\big) - (1- p^n) \cdot \gamma_v^n\big((p^{-n},1)\big) \geq - \varepsilon
\end{equation*}
follows and consequently
\begin{equation*}
 \gamma_v^n\big((p^{-n},1)\big) -  \gamma_v^n\big((p^{-n},0)\big) \leq  \frac{\varepsilon}{1- p^n}.
\end{equation*}

For $\tilde{p} = 0$, with (\ref{Zusatz})
\begin{equation*}
p^n \cdot \gamma_v^n\big((p^{-n},1)\big) - p^n \cdot \gamma_v^n\big((p^{-n},0)\big) \geq  - \varepsilon
\end{equation*}
follows and therefore
\begin{equation*}
  \gamma_v^n\big((p^{-n},1)\big) -  \gamma_v^n\big((p^{-n},0)\big) \geq - \frac{\varepsilon}{p^n}.
\end{equation*}
That implies
\begin{equation}\label{shown3}
\gamma_v^n\big((p^{-n},1)\big) -  \gamma_v^n\big((p^{-n},0)\big) \in \left[ -\frac{\varepsilon}{p^n} , \frac{\varepsilon}{1- p^n}\right] \in \left[-\varepsilon \xi_p^n , \varepsilon \xi_p^n \right]
\end{equation}
where
$\xi_p^n :=  \max\left(\frac{1}{1- p^n},\frac{1}{p^n}\right)$.

Denote $M(p) := \{n \in \mathcal{N} \ | \ p^n \in (0, 1) \}$ and
\begin{equation*}
\xi_p := \begin{cases} \max\limits_{n \in M(p)} \xi^n_p & \mbox{ if $M(p) \neq \emptyset$}\\
                        1 & \mbox{ otherwise}
        \end{cases}.
\end{equation*}
With (\ref{shown1}), (\ref{shown2}) and (\ref{shown3})
\begin{equation*}
\forall n \in \mathcal{N} :
\begin{cases}
    \gamma_v^n((p^{-n},1)) - \gamma_v^n((p^{-n},0)) \leq \varepsilon  & \mbox{ for $p^n = 0$}\\
    \gamma_v^n((p^{-n},1)) - \gamma_v^n((p^{-n},0)) \in  [-\varepsilon \xi_p , \varepsilon \xi_p]  & \mbox{ for $p^n \in (0, 1)$}\\
    \gamma_v^n((p^{-n},1)) - \gamma_v^n((p^{-n},0)) \geq -\varepsilon  & \mbox{ for $p^n = 1$}
\end{cases}
\end{equation*}
follows. With remark \ref{Bem2} and $ \xi_p \geq 1$, $p$ is $\varepsilon \xi_p$-perfect in $\Gamma_v$.
\end{proof}

\begin{remark}
With (\ref{shown3}) even
\begin{equation*}
\forall n \in \mathcal{N} :
\begin{cases}
    \gamma_v^n((p^{-n},1)) - \gamma_v^n((p^{-n},0)) \leq \varepsilon  & \mbox{ for $p^n = 0$}\\
    \gamma_v^n((p^{-n},1)) - \gamma_v^n((p^{-n},0)) \in  \left[ -\frac{\varepsilon}{p^n} , \frac{\varepsilon}{1- p^n}\right] & \mbox{ for $p^n \in (0, 1)$}\\
    \gamma_v^n((p^{-n},1)) - \gamma_v^n((p^{-n},0)) \geq -\varepsilon  & \mbox{ for $p^n = 1$}
\end{cases}
\end{equation*}
holds.
\end{remark}

\begin{conclusion}\label{Folgerung-GW}
Let $\Gamma_v$ be a given one-step game.
\smallskip

\quad $p \in [0, 1]^N$ is  $(0$-$)$perfect for $\Gamma_v$ \quad $\Longleftrightarrow$ \quad $p \in [0, 1]^N$ is a Nash-equilibrium in $\Gamma_v$
\end{conclusion}

\begin{conclusion}
Let $\Gamma_v$ be a given one-step game.
\smallskip

\quad $p \in \{0, 1\}^N$ is  $\varepsilon$-perfect for $\Gamma_v$ \quad $\Longleftrightarrow$ \quad $p \in \{0, 1\}^N$ is an $\varepsilon$-equilibrium in $\Gamma_v$

\end{conclusion}

\section{Equilibria in Quitting Games}
This section presents an imported result referring to equilibria in quitting games.
It was proved by Solan and Vieille in \cite{SolVie-QG}, they showed that a
cyclic $\varepsilon$-equilibrium ($\varepsilon > 0$) under some assumptions on the payoff function exists.
\medskip

\subsection{Preview}

This section studies the influence of a variation in one component of the strategy profile $p \in [0, 1]^N$ for a given quitting game $\Gamma_v$. Therefore define $\hat{p} \in [0, 1]^N$ as follows
\begin{equation}\label{def-p-dach}
\hat{p}:= \hat{p}_{m, \lambda}(p) := \big( p^{-m}, (1 -\lambda)p^m + \lambda\big),
\end{equation}
where $p \in [0,1]^N$, $\lambda \in [0,1]$ and $m \in \mathcal{N}$.

That means, $\hat{p}^m$ is a convex combination of $p^m$ and the pure strategy $1$, which accords to the action \emph{quit}. For $\lambda = 0$ one obtains $\hat{p} = p$ and for $\lambda = 1$ $\hat{p} = (p^{-m}, 1)$.
\medskip

\begin{theorem}\label{1}
Let $\Gamma_v$  be a given one-step game, $\lambda \in [0, 1]$, $p \in [0, 1]^N$ and $m \in \mathcal{N}$ an arbitrary but fixed chosen player.
Then the following hold:
\begin{enumerate}
    \item   $\varrho(\hat{p}, \emptyset) = (1 - \lambda)\varrho(p, \emptyset)$

            That means, the probability that all players play \emph{continue} under $\hat{p}$ is for the $\lambda$-fold smaller of the \emph{continue}-probability under $p$.
    \item   $\gamma_v(\hat{p}) = (1- \lambda) \cdot \gamma_v(p) + \lambda \cdot
            \gamma_v\big((p^{-m}, 1)\big)$
    \item   $\| \gamma_v(\hat{p}) - \gamma_v(p) \| \leq
            \lambda\cdot(r_{max} + \delta_v)$

            where $r_{max} := \max \{ |r^n_S| \ \big| \ n \in \mathcal{N}, S \in \mathcal{P}(\mathcal{S})\}$ and \ $\delta_v = \max \{\ \max\limits_{n \in \mathcal{N}} |v^n|, \ r_{max} \}$ \footnote{$\| \cdot \|$ denotes the maximum norm, that means $ \|y \| := \max\limits_{i \in N} |y^i| $ for all $y = (y^1, \ldots, y^N)^T \in \mathds{R}^N$.}
    \item   If $p \in[0,1]^N$ is $\eta$-perfect in $\Gamma_v$ ($\eta \geq 0$) and if $p^m \in (0,1]$ for the given player $m \in \mathcal{N}$ holds,
        then $\hat{p} = \hat{p}_{m, \lambda}$ is $\tilde{\eta}$-perfect in $\Gamma_v$, with $\tilde{\eta} := \max (2\lambda r_{max} + (1-\lambda)\eta, \eta)$.
\end{enumerate}
\end{theorem}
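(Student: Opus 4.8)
The plan is to establish the four claims in sequence, each building on the previous.

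\emph{Claims (1)--(3).} Claim (1) is a direct computation: since $\hat p^{\,n}=p^n$ for every $n\neq m$ and $1-\hat p^{\,m}=1-(1-\lambda)p^m-\lambda=(1-\lambda)(1-p^m)$, the product $\varrho(\hat p,\emptyset)=\prod_{n\in\mathcal N}(1-\hat p^{\,n})$ equals $(1-\lambda)\prod_{n\in\mathcal N}(1-p^n)=(1-\lambda)\varrho(p,\emptyset)$. For claim (2) I would apply Proposition \ref{Zerlegung} to player $m$ in the profile $\hat p$, noting $\hat p^{\,-m}=p^{-m}$, to get $\gamma_v(\hat p)=(1-\hat p^{\,m})\gamma_v((p^{-m},0))+\hat p^{\,m}\gamma_v((p^{-m},1))$; substituting $1-\hat p^{\,m}=(1-\lambda)(1-p^m)$ and $\hat p^{\,m}=(1-\lambda)p^m+\lambda$ and recognising $(1-p^m)\gamma_v((p^{-m},0))+p^m\gamma_v((p^{-m},1))=\gamma_v(p)$ (again Proposition \ref{Zerlegung}) yields the identity. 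Claim (3) then follows at once: by (2), $\gamma_v(\hat p)-\gamma_v(p)=\lambda\big(\gamma_v((p^{-m},1))-\gamma_v(p)\big)$, so in maximum norm $\|\gamma_v(\hat p)-\gamma_v(p)\|\leq\lambda\big(\|\gamma_v((p^{-m},1))\|+\|\gamma_v(p)\|\big)$; here $\|\gamma_v(p)\|\leq\delta_v$ by Proposition \ref{Absch-Erw.wert}, and since player $m$ quits with probability $1$ in $(p^{-m},1)$ we have $\varrho((p^{-m},1),\emptyset)=0$, so $\gamma_v((p^{-m},1))$ is a convex combination of the $r_S$, whence $\|\gamma_v((p^{-m},1))\|\leq r_{max}$.

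\emph{Claim (4), case $n=m$.} I would verify $\tilde\eta$-perfectness of $\hat p$ coordinate by coordinate via Remark \ref{Bem2}. Since $\hat p^{\,-m}=p^{-m}$, the quantity $\gamma_v^m((\hat p^{\,-m},1))-\gamma_v^m((\hat p^{\,-m},0))$ coincides with $\gamma_v^m((p^{-m},1))-\gamma_v^m((p^{-m},0))$. Because $p^m\in(0,1]$, the value $\hat p^{\,m}=(1-\lambda)p^m+\lambda$ is never $0$: it lies in $(0,1)$ exactly when $p^m\in(0,1)$ and $\lambda<1$, and there $\eta$-perfectness of $p$ gives $\gamma_v^m((p^{-m},1))-\gamma_v^m((p^{-m},0))\in[-\eta,\eta]\subseteq[-\tilde\eta,\tilde\eta]$; otherwise $\hat p^{\,m}=1$ (i.e.\ $p^m=1$ or $\lambda=1$), where only the lower bound $\gamma_v^m((p^{-m},1))-\gamma_v^m((p^{-m},0))\geq-\eta\geq-\tilde\eta$ is needed, and this is supplied by $\eta$-perfectness of $p$ precisely because $p^m\in(0,1]$. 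This is exactly where the hypothesis $p^m\in(0,1]$ enters: if $p^m=0$ were allowed, $\hat p^{\,m}$ could land in $(0,1)$, which would demand a lower bound on $\gamma_v^m((p^{-m},1))-\gamma_v^m((p^{-m},0))$ that $\eta$-perfectness does not provide.

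\emph{Claim (4), case $n\neq m$, and the obstacle.} Here $\hat p^{\,n}=p^n$, so the relevant case of the Remark \ref{Bem2} condition is the same for $\hat p$ as for $p$. For $b_n,b_m\in\{0,1\}$ let $q[b_n,b_m]$ be the profile in which $n$ plays $b_n$, $m$ plays $b_m$, and every other player plays its $p$-strategy, and put $D_b:=\gamma_v^n(q[1,b])-\gamma_v^n(q[0,b])$. Applying Proposition \ref{Zerlegung} to player $m$ inside $(\hat p^{\,-n},1),(\hat p^{\,-n},0),(p^{-n},1),(p^{-n},0)$ gives
\[
\gamma_v^n((\hat p^{\,-n},1))-\gamma_v^n((\hat p^{\,-n},0))=(1-\hat p^{\,m})D_0+\hat p^{\,m}D_1,\qquad
\gamma_v^n((p^{-n},1))-\gamma_v^n((p^{-n},0))=(1-p^m)D_0+p^m D_1,
\]
and substituting $1-\hat p^{\,m}=(1-\lambda)(1-p^m)$, $\hat p^{\,m}=(1-\lambda)p^m+\lambda$ into the first identity yields
\[
\gamma_v^n((\hat p^{\,-n},1))-\gamma_v^n((\hat p^{\,-n},0))=(1-\lambda)\big[\gamma_v^n((p^{-n},1))-\gamma_v^n((p^{-n},0))\big]+\lambda D_1 .
\]
In $q[1,1]$ and $q[0,1]$ player $m$ quits with certainty, so $\gamma_v^n(q[b_n,1])$ is a convex combination of $\{r_S^n:\emptyset\neq S\}$ and hence $|D_1|\leq 2r_{max}$. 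Plugging in $\eta$-perfectness of $p$: for $p^n=0$ the $p$-difference is $\leq\eta$, for $p^n\in(0,1)$ it lies in $[-\eta,\eta]$, and for $p^n=1$ it is $\geq-\eta$; multiplying by $1-\lambda\geq0$ and adding a term of absolute value $\leq2\lambda r_{max}$ places the $\hat p$-difference respectively in $(-\infty,(1-\lambda)\eta+2\lambda r_{max}]$, $[-(1-\lambda)\eta-2\lambda r_{max},(1-\lambda)\eta+2\lambda r_{max}]$, $[-(1-\lambda)\eta-2\lambda r_{max},\infty)$, all inside the tolerance $\tilde\eta=\max(2\lambda r_{max}+(1-\lambda)\eta,\eta)$. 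By Remark \ref{Bem2}, the two cases together give that $\hat p$ is $\tilde\eta$-perfect. The main obstacle is the bookkeeping in this last case: getting the bilinear decomposition right, bounding $D_1$, and correctly matching the one-sided versus two-sided bounds across the three coordinate cases — together with noticing that the hypothesis $p^m\in(0,1]$ is exactly what rescues the $n=m$ case.
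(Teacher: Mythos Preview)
Your proposal is correct and follows essentially the same approach as the paper: direct computation for (1), Proposition~\ref{Zerlegung} for (2), the triangle inequality plus Proposition~\ref{Absch-Erw.wert} and the bound $\|\gamma_v((p^{-m},1))\|\leq r_{max}$ for (3), and for (4) the same bilinear decomposition in player $m$ together with the bound $|D_1|\leq 2r_{max}$. The only organizational difference is that the paper first disposes of the trivial cases $\lambda=0$ or $p^m=1$ (where $\hat p=p$) and then treats $\lambda\in(0,1]$, $p^m\in(0,1)$, whereas you fold these into your case analysis for $n=m$; both are fine.
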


\begin{proof}
\smallskip

\textbf{To 1.:} \quad The definition of $\hat{p}$ (c.f. (\ref{def-p-dach}) ) implies
\begin{eqnarray*}
\varrho(\hat{p}, \emptyset) = \prod\limits_{n \in \mathcal{N}} ( 1- \hat{p}^n) & = & \big( 1- (1- \lambda)p^m - \lambda \big) \cdot \prod\limits_{n \in \mathcal{N}\setminus\{m\}} ( 1- p^n)\\
& = & (1- \lambda) \cdot \prod\limits_{n \in \mathcal{N}} (1 - p^n)\\
& = & ( 1 - \lambda) \cdot \varrho(p, \emptyset).
\end{eqnarray*}
\medskip

\textbf{To 2.:} \quad With proposition \ref{Zerlegung} and the definition of $\hat{p}$
\begin{eqnarray}
\nonumber
\gamma_v(\hat{p})
& = & \hat{p}^m \cdot \gamma_v\big((\hat{p}^{-m}, 1)\big) + (1 - \hat{p}^m) \cdot \gamma_v\big((\hat{p}^{-m}, 0)\big)\\
\nonumber
& = & \big((1- \lambda)p^m + \lambda \big) \cdot \gamma_v\big((p^{-m}, 1)\big) + (1- \lambda)(1 - p^m) \cdot \gamma_v\big((p^{-m}, 0)\big)\\
\nonumber
& = & (1 - \lambda) \Big(p^m \cdot \gamma_v\big((p^{-m}, 1)\big) + (1 - p^m) \cdot \gamma_v\big((p^{-m}, 0)\big)\Big) + \lambda \cdot \gamma_v\big((p^{-m}, 1)\big)\\
& = & (1- \lambda) \cdot \gamma_v(p) + \lambda \cdot \gamma_v\big((p^{-m}, 1)\big) \label{Zerlegung3}
\end{eqnarray}
holds.
\medskip

\textbf{To 3.:} \quad Under use of (\ref{Zerlegung3}) one obtains

\begin{eqnarray*}
\| \gamma_v(\hat{p}) - \gamma_v(p) \| &  = & \big\|(1- \lambda) \cdot \gamma_v(p) + \lambda \cdot \gamma_v\big((p^{-m}, 1)\big) - \gamma_v(p)\big\| \\
& = & \big\| \lambda \cdot \gamma_v\big((p^{-m}, 1)\big) - \lambda \gamma_v(p) \big\|\\
& = & \lambda \cdot \big\| \gamma_v\big((p^{-m}, 1)\big) - \gamma_v(p) \big\|\\
& \leq & \lambda \cdot \big( \big\| \gamma_v\big((p^{-m}, 1)\big) \big\| + \big\| \gamma_v(p) \big\| \big)
\end{eqnarray*}

Because player $m$ plays \emph{quit} with certainty in the alternative strategy profile $(p^{-m},1)$
\begin{equation*}
\gamma_v^n\big((p^{-m}, 1)\big) = \sum\limits_{S \in \mathcal{P}(\mathcal{N})} \varrho\big((p^{-m},1), S\big) \cdot r^n_S  \ \in \ \left[-r_{max}, r_{max}\right]
\end{equation*}
follows for all $n \in \mathcal{N}$ with $r_{max} = \max \{ |r^n_S| \ \big| \ n \in \mathcal{N}, S \in \mathcal{P}(\mathcal{S})\}$.
\begin{equation*}
\Longrightarrow \quad \| \gamma_v(\hat{p}) - \gamma_v(p) \|  \leq  \lambda \cdot \big( r_{max} + \delta_v \big)
\end{equation*}
where $\delta_v = \max \{\ \max\limits_{n \in \mathcal{N}} |v^n|, \ r_{max} \}$.
\medskip

\textbf{To 4.:} \quad For $\lambda = 0$ and as well as for $p^m = 1$, $\hat{p} = p $ follows and therefore $\hat{p}$ is $\eta$-perfect in $\Gamma_v$ in that case.
\medskip

For $\lambda \in (0, 1]$ and $p^m \in (0,1)$ it is to show, that
\begin{equation}\label{perf}
\forall n \in \mathcal{N} :
\begin{cases}
    \gamma_v^n((\hat{p}^{-n},1)) - \gamma_v^n((\hat{p}^{-n},0)) \leq \tilde{\eta}  & \mbox{ for $\hat{p}\,^n = 0$} \\
    \gamma_v^n((\hat{p}^{-n},1)) - \gamma_v^n((\hat{p}^{-n},0)) \in  [-\tilde{\eta}, \tilde{\eta}]  & \mbox{ for $\hat{p}\,^n \in (0, 1)$}  \\
    \gamma_v^n((\hat{p}^{-n},1)) - \gamma_v^n((\hat{p}^{-n},0)) \geq -\tilde{\eta}  & \mbox{ for $\hat{p}\,^n = 1$}
\end{cases}
\end{equation}
holds with $\tilde{\eta} = \max (2\lambda r_{max} + (1-\lambda)\eta, \eta)$.
\medskip

\textbf{Case 1:} \quad Consider player $m$. With $p$ $\eta$-perfect and $p^m \in (0,1)$
\begin{equation*}
\gamma_v\big((\hat{p}^{-m},1)\big) - \gamma_v\big((\hat{p}^{-m},0)\big) = \gamma_v\big((p^{-m},1)\big) - \gamma_v\big((p^{-m},0)\big) \in [- \eta, \eta]
\end{equation*}
follows immediately and therefore the second inequality from (\ref{perf}) for $\hat{p}^m \in (0,1)$ respectively the last  inequality of (\ref{perf}) for $\hat{p} = 1$.
\medskip

\textbf{Case 2:} \quad Consider player $n \in \mathcal{N}\setminus \{m\}$.
\smallskip

With the definition of $\hat{p}$ for all $i \in \mathcal{N}$ and $b \in [0, 1]$
\begin{equation*}
(\hat{p}^{-n}, b)^i = \begin{cases}
                (1- \lambda)\cdot (p^{-n}, b)^i + \lambda & \mbox{ for $i = m$}\\
                (p^{-n}, b)^i & \mbox{ for $i \in \mathcal{N} \setminus \{m\}$}
                \end{cases}
\end{equation*}
follows. Under use of this and equation (\ref{Zerlegung3})
one obtains
\begin{equation*}
\gamma_v^n\big((\hat{p}^{-n},b)\big)
 = (1- \lambda) \cdot  \gamma_v^n\big( (p^{-n},b)\big) + \lambda \cdot \gamma_v^n\big(\pn{p}{n}{m}{b}{1}\big).
\end{equation*}

This implies
\begin{eqnarray}
\nonumber
\gamma_v^n\big((\hat{p}^{-n},1)\big) - \gamma_v^n\big((\hat{p}^{-n},0)\big)
& = & (1- \lambda) \cdot  \gamma_v^n\big( (p^{-n},1)\big) + \lambda \cdot \gamma_v^n\big(\pn{p}{n}{m}{1}{1}\big)\\
\nonumber
&   & - \ (1- \lambda) \cdot  \gamma_v^n\big( (p^{-n},0)\big) - \lambda \cdot \gamma_v^n\big(\pn{p}{n}{m}{0}{1}\big)\\
\nonumber
& = &   (1- \lambda) \cdot \Big( \gamma_v^n\big( (p^{-n},1)\big) -
        \gamma_v^n\big( (p^{-n},0)\big) \Big)\\
&   & +  \ \lambda  \Big( \gamma_v^n\big(\pn{p}{n}{m}{1}{1}\big) -
        \gamma_v^n\big(\pn{p}{n}{m}{0}{1}\big) \Big). \label{Gl3-b}
\end{eqnarray}

\textbf{(a)} \quad Consider player $n \in \mathcal{N}\setminus \{m\}$ with $p^n = 0$.
\begin{equation}\label{Gl2-b}
p^n = 0 \ \wedge \ p \mbox{ $\eta$-perfect} \quad \Longrightarrow \quad \gamma_v^n\big((p^{-n},1)\big) - \gamma_v^n\big((p^{-n},0)\big) \leq \eta
\end{equation}

With use of (\ref{Gl3-b}) and (\ref{Gl2-b})
\begin{eqnarray*}
\gamma_v^n\big((\hat{p}^{-n},1)\big) - \gamma_v^n\big((\hat{p}^{-n},0)\big)
& \leq & (1- \lambda) \cdot \eta\\
&  & + \ \lambda \cdot \Big( \gamma_v^n\big(\pn{p}{n}{m}{1}{1}\big) -
        \gamma_v^n\big(\pn{p}{n}{m}{0}{1}\big)\Big)\\
& \leq & (1- \lambda) \cdot \eta\\
&  & + \ \lambda \cdot \Big( \big|\gamma_v^n\big(\pn{p}{n}{m}{1}{1}\big)\big| +
          \big|\gamma_v^n\big(\pn{p}{n}{m}{0}{1}\big)\big|\Big)\\
& \leq & (1- \lambda) \cdot \eta + \lambda \left( r_{max} + r_{max}\right)\\
& = & 2 \lambda r_{max} + (1- \lambda) \cdot \eta
\end{eqnarray*}
follows.
\medskip

\textbf{(b)} \quad Consider player $n \in \mathcal{N}\setminus \{m\}$ with $p^n \in (0, 1)$.
\begin{equation}\label{Nummer}
p^n \in (0, 1) \ \wedge \ p \mbox{ $\eta$-perfect} \quad \Longrightarrow \quad \gamma_v^n\big((p^{-n},1)\big) - \gamma_v^n\big((p^{-n},0)\big) \in [-\eta, \eta]
\end{equation}

With this analogously to case 2(a) it follows immediately that
\begin{eqnarray*}
\gamma_v^n\big((\hat{p}^{-n},1)\big) - \gamma_v^n\big((\hat{p}^{-n},0)\big)
& \leq & 2 \lambda r_{max} +  (1- \lambda) \cdot \eta \label{Gl5-b}.
\end{eqnarray*}
Under usage of (\ref{Nummer}) and equation (\ref{Gl3-b}) one obtains
\begin{eqnarray}
\nonumber
\gamma_v^n\big((\hat{p}^{-n},1)\big) - \gamma_v^n\big((\hat{p}^{-n},0)\big)
& \geq & -  (1- \lambda) \cdot \eta\\
\nonumber
&  & - \ \lambda \cdot \Big(\big|\gamma_v^n\big(\pn{p}{n}{m}{1}{1}\big)\big| +
          \big|\gamma_v^n\big(\pn{p}{n}{m}{0}{1}\big)\big|\Big)\\
& \geq & - 2 \lambda r_{max} -  (1- \lambda) \cdot \eta
\end{eqnarray}
and therefore
\begin{equation*}
\gamma_v^n\big((\hat{p}^{-n},1)\big) - \gamma_v^n\big((\hat{p}^{-n},0)\big) \in \Big[- \big(2 \lambda r_{max} + (1- \lambda) \cdot \eta\big), \ 2 \lambda r_{max} + (1- \lambda) \cdot \eta\Big].
\end{equation*}
\smallskip

\textbf{(c)} \quad Consider player $n \in \mathcal{N}\setminus \{m\}$ with $p^n = 1$.
\begin{equation}\label{Gl4-b}
p^n = 1 \ \wedge \ p \mbox{ $\eta$-perfect} \quad \Longrightarrow \quad \gamma_v^n\big((p^{-n},1)\big) - \gamma_v^n\big((p^{-n},0)\big) \geq -\eta
\end{equation}
With equation (\ref{Gl4-b}), (\ref{Gl3-b}) and inequality (\ref{Gl5-b})
\begin{equation*}
\gamma_v^n\big((\hat{p}^{-n},1)\big) - \gamma_v^n\big((\hat{p}^{-n},0)\big) \geq - (2 \lambda r_{max} + (1- \lambda)\cdot \eta)
\end{equation*}
follows.

\end{proof}

\pagebreak

\begin{remark}\textbf{(To theorem \ref{1} 3.)}
\smallskip

\begin{enumerate}
\item
Let $\Gamma_v$ be a given one-step game with $v \in [-2r_{max}, 2r_{max}]$, $p \in [0, 1]^N$, where $p^m \in (0,1)^N$ for at least one player $m \in \mathcal{N}$, a strategy profile in $\Gamma_v$ and $\lambda \in (0, 1)$. Solan and Vieille state in \cite{SolVie-QG} the following estimation:
\begin{equation}
\| \gamma_v(\hat{p}) - \gamma_v(p)\| \leq 2  \lambda r_{max}, \label{estimation1}
\end{equation}
with $\hat{p}$ and $r_{max}$ like before.
\medskip

\textbf{Counter-example:} Consider the following one-step game $\Gamma_v$ with $v = \left(\begin{array}{c} 1 \\ 2 \end{array}\right)$.
\begin{center}
\begin{tabular}{cc|c|c|}\hhline{~~--}
& & \multicolumn{2}{c|}{Player 2} \\
& &  continue & quit \\
\hline
\multicolumn{1}{|c}{Player 1} & \begin{tabular}{c}
             continue \\ quit
           \end{tabular} & \begin{tabular}{c}
                               $(\ 1 \ , \ 2  )$ \\
                               $(\ 1 \ ,  -1 \ )$
                            \end{tabular} & \begin{tabular}{c}
                                              $(\ 0 \ , \ 1 \ )$\\
                                              $(-1 \ ,  -0.5 \ )$
                                            \end{tabular}\\
                                            \hline
\end{tabular}
\bigskip
\end{center}

\quad $\Longrightarrow \quad r_{max} =  \max \{|r^n_S| \ \big| \ n \in \mathcal{N}, S \in \mathcal{P}(\mathcal{N}) \} = 1$ \quad and \quad  $\max\limits_{n \in \mathcal{N}} |v^n| = 2$

\quad  $\Longrightarrow \quad \delta_v = 2$

Obviously $p = \left(\begin{array}{c} 0 \\ 0 \end{array}\right)$ is one (and the only) equilibrium in $\Gamma_v$ with the expected payoff \linebreak
$\gamma_v(p) = v$.

Let $\lambda = 0.1$ be given. It holds that $\varrho(p, \emptyset) = 1 > 1- \lambda = 0.9$. Furthermore let $\hat{p}_{\lambda, 1}$ be defined like before, that means
\begin{equation*}
\hat{p}_{\lambda, 1} = \hat{p} = \left(\begin{array}{c} p^1 + \lambda ( 1- p^1) \\ p^2 \end{array}\right)
          = \left(\begin{array}{c} 0 + 0.1 \cdot 1 \\ 0 \end{array}\right)
          = \left(\begin{array}{c} 0.1 \\ 0 \end{array}\right).
\end{equation*}
It holds that
\begin{equation*}
\gamma_v(\hat{p}) = 0.9 \cdot \left(\begin{array}{c} 1 \\ 2 \end{array}\right) + 0.1 \left(\begin{array}{r} 1 \\ -1 \end{array}\right) = \left(\begin{array}{c} 1 \\ 1.7 \end{array}\right).
\end{equation*}
From this
\begin{equation*}
\| \gamma_v(\hat{p}) - \gamma_v(p)\| = \|\left(\begin{array}{c} 1 \\ 1.7 \end{array}\right) -\left(\begin{array}{c} 1 \\ 2 \end{array}\right)\| = 0.3 = \lambda (r_{max} + \delta_v) > 2 \lambda r_{max} = 2 \cdot 0.1 \cdot 1 = 0.2
\end{equation*}
follows. So estimation (\ref{estimation1}) does not hold.

Furthermore the counter-example shows that the estimation in theorem \ref{1} 3. is even the best estimation.

\item Interpretation: Let $\Gamma_v$ be a given one-step game, $\lambda \in [0,1]$ and $p \in [0, 1]^N$ with $p^m \in (0,1)$ for at least one player $m \in \mathcal{N}$. If the \emph{continue} probability of one player $m$ ($m \in \mathcal{N}$) is decreased by the $\lambda$-fold, then the expected payoff of the players changes maximal at $\lambda(r_{max} + \delta_v)$ for a component.
\end{enumerate}
\end{remark}

\begin{remark}[To theorem \ref{1} 4.]\label{Bem-p0}
Assuming that $p^m = 0$ in theorem \ref{1} 4., $\hat{p}^m = \lambda \in (0,1)$ follows. In order to prove that $\hat{p}$ is $\tilde{\eta}$-perfect it is to show that
\begin{equation*}
\gamma_v^m((\hat{p}^{-m},1)) - \gamma_v^m((\hat{p}^{-m},0)) \in  [-\tilde{\eta}, \tilde{\eta}]
\end{equation*}
But with $p$ $\eta$-perfect only
\begin{equation*}
\gamma_v\big((\hat{p}^{-m},1)\big) - \gamma_v\big((\hat{p}^{-m},0)\big) = \gamma_v\big((p^{-m},1)\big) - \gamma_v\big((p^{-m},0)\big) \leq  \eta \leq \tilde{\eta}
\end{equation*}
follows.

For the other direction holds
\begin{eqnarray*}
\gamma_v\big((\hat{p}^{-m},1)\big) - \gamma_v\big((\hat{p}^{-m},0)\big)
& = &  \gamma_v\big((p^{-m},1)\big) - \gamma_v\big((p^{-m},0)\big) \\
& \geq &  - \ \big| \gamma_v\big((p^{-m},1)\big)\big| - \big|\gamma_v\big((p^{-m},0)\big)\big|\\
& \geq & - r_{max} - \delta_v,
\end{eqnarray*}
however this holds for all $p \in [0,1]^N$ in $\Gamma_v$.

But the proof that
\begin{equation*}
\forall n \in \mathcal{N}\setminus \{m\} :
\begin{cases}
    \gamma_v^n((\hat{p}^{-n},1)) - \gamma_v^n((\hat{p}^{-n},0)) \leq \tilde{\eta}  & \mbox{ for $\hat{p}\,^n = 0$}\\
    \gamma_v^n((\hat{p}^{-n},1)) - \gamma_v^n((\hat{p}^{-n},0)) \in  [-\tilde{\eta}, \tilde{\eta}]  & \mbox{ for $\hat{p}\,^n \in (0, 1)$}\\
    \gamma_v^n((\hat{p}^{-n},1)) - \gamma_v^n((\hat{p}^{-n},0)) \geq -\tilde{\eta}  & \mbox{ for $\hat{p}\,^n = 1$}
\end{cases}
\end{equation*}
will remain unaffected from this case.
\end{remark}

\begin{conclusion}
With theorem \ref{1} 4.  it follows immediately that if $p \in [0,1]^N$ with $p^m \in (0, 1)$, for at least one $m \in \mathcal{N}$, is $(0-)$perfect in $\Gamma_v$ (and therefore an equilibrium in $\Gamma_v$), $\hat{p}$ is $2\lambda r_{max}$-perfect in $\Gamma_v$.
\end{conclusion}
\medskip

The following example shows, that the estimation in theorem \ref{1} 4. is even the best approximation.

\textbf{Example 4:} Consider the following one-step game $\Gamma_v$ with $v = (\frac{9}{10}, \frac{10}{9})^T \in \mathds{R}^2$ given by
\begin{center}
\begin{tabular}{cc|c|c|}\hhline{~~--}
& & \multicolumn{2}{c|}{Player 2} \\
& &  continue & quit \\
\hline
\multicolumn{1}{|c}{Player 1} & \begin{tabular}{c}
             continue \\ quit
           \end{tabular} & \begin{tabular}{c}
                               $\left(\begin{array}{cc} 9/10, &  10/9  \end{array}\right)$ \\
                               $\left(\begin{array}{cc} 1, &  -1  \end{array}\right)$
                            \end{tabular} & \begin{tabular}{c}
                                              $\left(\begin{array}{cc} 1/2, &  1  \end{array}\right)$\\
                                              $\left(\begin{array}{cc} -1, &  1  \end{array}\right)$
                                            \end{tabular}\\
                                            \hline
\end{tabular}
\bigskip

$\Longrightarrow \quad r_{max} = 1$

\end{center}
Let $p = \left(\begin{array}{c} 0.1 \\ 0 \end{array}\right)$ be the given strategy profile in $\Gamma_v$. It holds
\begin{eqnarray*}
\gamma_v^1\big((p^{-1}, 1)\big) - \gamma_v^1\big((p^{-1},0)\big) & = & 1 - v_1 = 1 - \frac{9}{10} = 0.1 \in [-0.1, \ 0.1] \ \mbox{ and}\\
\gamma_v^2\big((p^{-2}, 1)\big) - \gamma_v^2\big((p^{-2},0)\big) &  = &  0.1 \cdot 1 + 0.9 \cdot 1 - 0.1 \cdot (-1) - 0.9 \cdot \frac{10}{9} \\
&  = & 0.1 \leq 0.1.
\end{eqnarray*}
Therefore $p$ is $\eta$-perfect in $\Gamma_v$ with $\eta = 0.1$.

Let $\lambda = 0.2$ be given.
\begin{equation*}
\Longrightarrow \quad \hat{p}_{\lambda, 1} = \hat{p} = \left(\begin{array}{c} (1 - \lambda) \cdot p^1 + \lambda \\ p^2 \end{array} \right) = \left(\begin{array}{c} 0.8 \cdot 0.1 + 0.2 \\ 0 \end{array}\right) = \left(\begin{array}{c}  0.28 \\ 0 \end{array}\right)
\end{equation*}
For $\hat{p}$ the following hold
\begin{eqnarray*}
\gamma_v^1\big((\hat{p}^{-1}, 1)\big) - \gamma_v^1\big((\hat{p}^{-1},0)\big) & = & 1 - \frac{9}{10} = 0.1 \in [-0.1, \ 0.1] \ \mbox{ and}\\
\gamma_v^2\big((\hat{p}^{-2}, 1)\big) - \gamma_v^2\big((\hat{p}^{-2},0)\big) &  = &  0.28 \cdot 1 + 0.72 \cdot 1 - 0.28 \cdot (-1) - 0.72 \cdot \frac{10}{9} \\
&  = & 0.48 \\
& = & (1 - \lambda)\eta + 2 \lambda r_{max} = 0.8 \cdot 0.1 + 2 \cdot 0.2 \cdot 1 = 0.48.
\end{eqnarray*}
So $\hat{p}$ is only 0.48-perfect in $\Gamma_v$ and the estimation in theorem \ref{1} 4. holds.

\medskip

\subsection{Equilibria under some assumptions on the payoff function}

This section shows which importance one-step games have, referring to the detection of equilibria in quitting games.

Firstly an important theorem from Solan and Vieille, stated in \cite{SolVie-QG} is quoted. The proof of this theorem is divided into three parts, represented by the propositions \ref{Prop1}, \ref{Prop2} and \ref{Prop3}. Secondly the proposition \ref{Prop1} is proved at length by using the now known results about one-step games and their strategy profiles.

\begin{theorem}\label{Prop0}
Let be $\varepsilon > 0$. Every quitting game $G$ that satisfies the following has a cyclic subgame $\varepsilon$-equilibrium.
\begin{enumerate}
    \item   $r_{\{n\}}^n = 1$ for every $n \in \mathcal{N}$;
    \item   $r_S^n \leq 1$ for every $n \in \mathcal{N}$ and every $S$ such that $n \in S$.
\end{enumerate}
\end{theorem}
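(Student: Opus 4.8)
The plan is to follow the dichotomy of Solan and Vieille: either $G$ already has a \emph{stationary} $\varepsilon$-equilibrium, or one must assemble a genuinely periodic one. For the first horn, observe that a stationary profile $\pi$ with common vector $x\in[0,1]^N$, all $x^n>0$, ``unrolls'' into the infinitely repeated one-step game: writing $v:=\gamma(\pi)$, conditioning on the event that nobody has quit through round $k$ returns the original game, so $v=\gamma_v(x)$, and by Proposition \ref{Zerlegung} the profile $\pi$ is an $\varepsilon$-equilibrium of $G$ iff $x$ is an $\varepsilon$-equilibrium of the one-step game $\Gamma_v$; by Theorem \ref{Theorem-perfect-GW} and Conclusion \ref{Folgerung-GW} this is in turn governed by $\varepsilon$-perfectness. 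Every one-step game $\Gamma_v$ has a Nash equilibrium (Nash's theorem), equivalently a $0$-perfect profile by Conclusion \ref{Folgerung-GW}, and by Proposition \ref{Absch-Erw.wert} its payoff lies in the compact box $[-\delta_v,\delta_v]^N$; restricting $v$ to the feasible polytope $V:=\mathrm{conv}\big(\{r_S:\emptyset\ne S\subseteq\mathcal N\}\cup\{\underline{0}\}\big)$, the rule $v\mapsto\{\gamma_v(p):p\text{ is }0\text{-perfect in }\Gamma_v\}$ is a self-correspondence of $V$. If it closes up (within $\varepsilon$) at some $v$, we obtain a stationary, hence cyclic, subgame $\varepsilon$-equilibrium and are done; here assumptions (1)--(2) force $x^n>0$ (continuing forever pays $0<1=r^n_{\{n\}}$), so the game ends almost surely and $\gamma(\pi)$ is well defined.

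For the second horn -- no stationary $\varepsilon$-equilibrium -- the argument splits into three steps, which are Propositions \ref{Prop1}, \ref{Prop2} and \ref{Prop3}. Step one (the part this paper carries out in full) extracts from the \emph{failure} of the fixed-point property a workable configuration: a feasible $v\in V$, a player $m\in\mathcal N$, and a $0$-perfect equilibrium $p$ of $\Gamma_v$ in which $p^m\in(0,1)$ while $p^n\in\{0,1\}$ for $n\ne m$, plus a controlled relation among $v$, $\gamma_v(p)$ and $\gamma_v((p^{-m},1))$. The engine is Theorem \ref{1}: perturbing player $m$ toward \emph{quit} by a factor $\lambda$ gives $\hat p=\hat p_{m,\lambda}(p)$ with $\gamma_v(\hat p)=(1-\lambda)\gamma_v(p)+\lambda\,\gamma_v((p^{-m},1))$ (part 2), moves the payoff by at most $\lambda(r_{max}+\delta_v)$ (part 3), and keeps the profile $\tilde\eta$-perfect with $\tilde\eta=\max(2\lambda r_{max}+(1-\lambda)\eta,\eta)$ (part 4); so one can slide continuously, and approximately perfectly, along a one-parameter family of one-step equilibria.

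The remaining steps build and check the cyclic profile. One chains such perturbations: choose a finite player sequence $m_1,\dots,m_T$ and small weights $\lambda_1,\dots,\lambda_T$ so that the induced continuation vectors $v_0\to v_1\to\cdots\to v_T$ trace a loop in $V$ with $\|v_T-v_0\|<\varepsilon$, using part 3 of Theorem \ref{1} to keep each step small and the consistency relation of Proposition \ref{Prop1} to keep the path ``on shell''. Define the period-$T$ strategy profile that plays, in phase $t$, the (approximately) $0$-perfect one-step profile attached to $v_{t-1}$; since in each phase the active player quits with positive probability, the game ends almost surely and $\gamma$ is well defined for $\pi$ and every shift $\pi_j$. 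Finally, using the coordinatewise linearity of the one-step payoff (Proposition \ref{Zerlegung}, Conclusion \ref{Zerlegung2}) together with $\varepsilon$-perfectness at every phase, a one-stage-deviation argument shows no player gains more than (accumulated) $\varepsilon$ in the dynamic game; here assumptions (1)--(2) are decisive in the bookkeeping, since a deviator's best reply is to quit and $r^n_S\le 1=r^n_{\{n\}}$ caps his gain at exactly the ``exit value'' against which the loop was balanced. Shifting the phase shows $\pi_j$ is an $\varepsilon$-equilibrium for every $j$, so $\pi$ is a cyclic subgame $\varepsilon$-equilibrium.

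The main obstacle is the global part of the second horn: proving that the \emph{absence} of stationary $\varepsilon$-equilibria \emph{forces} a finite perturbation loop that closes up. This has the flavour of a Poincar\'e--Bendixson or degree argument on the polytope $V$, and is delicate because (i) the $0$-perfect-equilibrium correspondence of $\Gamma_v$ is only upper semicontinuous, with possibly non-convex values, so the fixed-point input needs care; (ii) the identity of the single totally mixed player $m$ and of the pure players can jump as $v$ moves, and one must bound this to keep $T$ finite; and (iii) the errors from Theorem \ref{1}, parts 3--4, accumulate over the $T$ phases, so the weights $\lambda_t$ must be chosen quantitatively small against $\varepsilon$, $r_{max}$ and $T$ for the loop to close within $\varepsilon$. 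The remaining ingredients -- the one-step identities and the one-stage-deviation bookkeeping -- are routine given the machinery already assembled.
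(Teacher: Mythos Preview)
Your proposal has the right flavor but mischaracterizes the paper's decomposition at several key points.

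First, the set $V$. The paper does \emph{not} use the feasible polytope $\mathrm{conv}(\{r_S\}\cup\{\underline 0\})$; it uses $V=\{\tilde v\in[-2r_{\max},2r_{\max}]^N:\exists n,\ \tilde v^n\le 1\}$. This is exactly the set on which the correspondence $\psi_\varepsilon$ (profiles that are $2\varepsilon r_{\max}$-perfect with $\varrho(p,\emptyset)\le 1-\varepsilon$) can be shown to have nonempty values and to map back into $V$. The ``some coordinate $\le 1$'' condition is what assumptions (1)--(2) buy, via Remark~\ref{Bem-p-ex}: for any Nash equilibrium $p\ne 0$ of $\Gamma_v$ there is a player $m$ with $p^m>0$ and $\gamma_v^m(p)\le 1$.

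Second, the role of Proposition~\ref{Prop1}. It is \emph{not} ``extract from the failure of a fixed point a configuration with one totally mixed player and the others pure''. Proposition~\ref{Prop1} says, unconditionally, that $\psi_\varepsilon(v)\ne\emptyset$ for every $v\in V$: one takes any Nash equilibrium $p$ of $\Gamma_v$, picks the distinguished player $m$ (either $v^m=1$ if $p=0$, or $p^m>0$ with $\gamma_v^m(p)\le 1$), and perturbs via $\hat p=\hat p_{m,\varepsilon}(p)$ using Theorem~\ref{1} to force $\varrho(\hat p,\emptyset)\le 1-\varepsilon$ while keeping $\hat p$ $2\varepsilon r_{\max}$-perfect and $\gamma_v(\hat p)\in V$. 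No structural claim about the other players being pure is made or needed.

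Third, the dichotomy. In the paper's scheme the stationary-vs-cyclic alternative appears only at the \emph{end}, in Proposition~\ref{Prop3}: once Proposition~\ref{Prop2} has produced a cyclic terminating profile $\pi$ with $p_i$ $\varepsilon$-perfect in $\Gamma_{\gamma(\pi_{i+1})}$ for every $i$, Proposition~\ref{Prop3} concludes that either $\pi$ is a subgame $\varepsilon^{1/6}$-equilibrium or a stationary $\varepsilon^{1/6}$-equilibrium exists. You place the dichotomy first and make the cyclic construction contingent on the nonexistence of a stationary equilibrium; that is not the paper's route and, as you yourself note, turning ``no stationary $\varepsilon$-equilibrium'' into a closed perturbation loop would require a separate degree/recurrence argument you do not supply.

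Finally, your ``trace a loop $v_0\to\cdots\to v_T$ with $\|v_T-v_0\|<\varepsilon$'' is not the mechanism of Proposition~\ref{Prop2}. There the input is simply that $\psi_\varepsilon$ has nonempty values on the compact $V$; the cyclic profile comes from an orbit/fixed-point argument for this correspondence (Solan--Vieille), not from hand-built small-step chains with error accumulation controlled by Theorem~\ref{1}.3--4. In short: the three-proposition skeleton is right, but your description of what each proposition asserts and how they fit together does not match the paper.
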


Before quoting the above mentioned propositions another notation is needed.

Let $\tilde{V}$ be a subset of $\mathds{R}^N$ and $\varepsilon \in (0, 1)$ be given.
$\psi_{\varepsilon}$ denotes a correspondence\footnote{
Let $K$ and $L$ be sets. A correspondence $J: K \twoheadrightarrow L$ is a subset $J$ of $K \times L$ and one defines for all $k \in K$:
$J(k) := \{ l | (k,l) \in J\}$.
It is not assumed a priori that $J(k) \neq \emptyset$ for all or any particular $k \in K$.} from $\tilde{V}$ into $\tilde{V}$, where
\begin{equation*}
\psi_{\varepsilon}(v) := \psi_{\varepsilon, \tilde{V}}(v) := \big\{ \gamma_v(p) \ \big| \ \gamma_v(p) \in
\tilde{V},
\ p \in [0, 1]^N, \ p \mbox{ $2\varepsilon r_{max}$-perfect}, \ \varrho(p, \emptyset) \leq 1- \varepsilon \big\}.
\end{equation*}


\begin{secsatz}\label{Prop1}
Let $\varepsilon \in (0, 1 )$ be given. Define
\begin{equation*}
V := \big\{ \tilde{v} \in [-2r_{max}, 2r_{max}]^N \ \big| \ \exists n \in \mathcal{N} :
                \tilde{v}^n \leq 1 \big\}.
\end{equation*}
Assume that
\begin{enumerate}
		\item 	$r_{\{n\}}^n = 1$ for every $n \in \mathcal{N}$
		\item	for every $v \in V$ an equilibrium $p$ in $\Gamma_v$ exists, such that either
				\begin{enumerate}
					\item	$p = (0, 0, \ldots, 0)^T$ (that means all players choose \emph{continue}) or
					\item	$p \ne (0, 0, \ldots, 0)^T$ and $\gamma_v^n(p) \leq 1$ hold for some $n \in \mathcal{N}$ with  $p^n > 0$.
				\end{enumerate}
\end{enumerate}
Then  $\psi_{\varepsilon}(v)\neq \emptyset$ for all $v \in V$.
\end{secsatz}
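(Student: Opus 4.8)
The plan is to fix an arbitrary $v \in V$, invoke hypothesis (2) to obtain an equilibrium $p$ in $\Gamma_v$, and then exhibit a strategy profile $q$ --- either $p$ itself or the perturbed profile $\hat p=\hat p_{m,\lambda}(p)$ introduced in (\ref{def-p-dach}) --- with $\gamma_v(q)\in\psi_\varepsilon(v)$. Two observations reduce what must be checked. By Conclusion \ref{Folgerung-GW} the equilibrium $p$ is $(0$-$)$perfect, hence in particular $2\varepsilon r_{max}$-perfect, and the perturbation rules of Theorem \ref{1} (parts 1, 2, 4) will let me control the perturbed profile. By Proposition \ref{Absch-Erw.wert}, since $v\in[-2r_{max},2r_{max}]^N$ forces $\delta_v\le 2r_{max}$, one has $\gamma_v(q)\in[-2r_{max},2r_{max}]^N$ automatically for any profile $q$; so membership $\gamma_v(q)\in V$ amounts to just producing one index $n$ with $\gamma_v^n(q)\le 1$. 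Thus I must arrange three things for $q$: it is $2\varepsilon r_{max}$-perfect, $\varrho(q,\emptyset)\le 1-\varepsilon$, and some coordinate of $\gamma_v(q)$ is at most $1$.

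If $\varrho(p,\emptyset)\le 1-\varepsilon$ already, then $q:=p$ works: it is $2\varepsilon r_{max}$-perfect, its continue-probability is small enough, and the coordinate condition holds either by hypothesis (2b) directly, or in case (2a) because then $p=0$ and $\gamma_v(p)=v\in V$. (Case (2a) in fact never falls here, since $\varrho(0,\emptyset)=1>1-\varepsilon$.) The substantial situation is $\varrho(p,\emptyset)> 1-\varepsilon$. There I set $\lambda:=\varepsilon$, choose a player $m$ as below, and put $\hat p:=\hat p_{m,\varepsilon}(p)$; by Theorem \ref{1}, part 1, $\varrho(\hat p,\emptyset)=(1-\varepsilon)\varrho(p,\emptyset)\le 1-\varepsilon$, so the continue-probability condition is automatic for every choice of $m$.

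In case (2b) I pick $m$ with $p^m>0$ and $\gamma_v^m(p)\le 1$. Since $p^m\in(0,1]$, Theorem \ref{1}, part 4 (with $\eta=0$) applies and gives that $\hat p$ is $2\varepsilon r_{max}$-perfect. For the coordinate condition, Theorem \ref{1}, part 2 gives $\gamma_v^m(\hat p)=(1-\varepsilon)\gamma_v^m(p)+\varepsilon\,\gamma_v^m((p^{-m},1))$, and since $p$ is $0$-perfect with $p^m>0$, Proposition \ref{Zerlegung} and Remark \ref{Bem2} force $\gamma_v^m((p^{-m},1))=\gamma_v^m(p)\le 1$ (the two coincide when $p^m\in(0,1)$ and are literally equal when $p^m=1$), so $\gamma_v^m(\hat p)\le 1$. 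In case (2a), $p=0$; here the key preliminary is that $0$ being an equilibrium (hence $0$-perfect) forces, via hypothesis (1), $1-v^n=\gamma_v^n((p^{-n},1))-\gamma_v^n((p^{-n},0))\le 0$ for every $n$, i.e. $v^n\ge 1$ for all $n$, and together with $v\in V$ this yields a player $n_0$ with $v^{n_0}=1$; I take $m:=n_0$, so $\hat p^m=\varepsilon\in(0,1)$ and $\hat p^n=0$ for $n\ne m$. For $n\ne m$ the needed bound $\gamma_v^n((\hat p^{-n},1))-\gamma_v^n((\hat p^{-n},0))\le 2\varepsilon r_{max}$ is exactly the estimate proved in Case 2(a) of the proof of Theorem \ref{1}, part 4 (which never uses $p^m\in(0,1)$). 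For player $m$ itself --- the gap identified in Remark \ref{Bem-p0} --- the perturbation leaves the other coordinates untouched, so $\gamma_v^m((\hat p^{-m},1))-\gamma_v^m((\hat p^{-m},0))=r_{\{m\}}^m-v^m=1-1=0\in[-2\varepsilon r_{max},2\varepsilon r_{max}]$; hence $\hat p$ is $2\varepsilon r_{max}$-perfect. Finally, by Theorem \ref{1}, part 2, $\gamma_v^m(\hat p)=(1-\varepsilon)v^m+\varepsilon\,r_{\{m\}}^m=1\le 1$, so $\gamma_v(\hat p)\in V$. In every case the constructed $q$ meets the three requirements, hence $\gamma_v(q)\in\psi_\varepsilon(v)$ and $\psi_\varepsilon(v)\ne\emptyset$.

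I expect the main obstacle to be precisely the $p=0$ case: Theorem \ref{1}, part 4 is inapplicable when $p^m=0$ (Remark \ref{Bem-p0}), so one must extract from the fact that $0$ is an equilibrium, together with hypothesis (1) and $v\in V$, the existence of a player $m$ with $v^m=1$, and this single identity is what simultaneously repairs the perfectness of $\hat p$ at coordinate $m$ and keeps $\gamma_v^m(\hat p)\le 1$. Everything else is routine bookkeeping with Theorem \ref{1}, Proposition \ref{Absch-Erw.wert}, Proposition \ref{Zerlegung} and Conclusion \ref{Folgerung-GW}.
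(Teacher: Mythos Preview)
Your proof is correct and follows essentially the same approach as the paper: invoke the equilibrium $p$, pass to the perturbation $\hat p=\hat p_{m,\varepsilon}(p)$, and handle the $p=0$ case by using hypothesis (1) together with $v\in V$ to find a player $m$ with $v^m=1$, which simultaneously repairs the perfectness at coordinate $m$ (filling the gap of Remark \ref{Bem-p0}) and secures $\gamma_v^m(\hat p)\le 1$. The only cosmetic differences are that you add a harmless shortcut when $\varrho(p,\emptyset)\le 1-\varepsilon$ already, and in case (2b) you deduce $\gamma_v^m(\hat p)\le 1$ via $0$-perfectness and linearity rather than via the equilibrium inequality $\gamma_v^m(p)\ge\gamma_v^m(\hat p)$ as the paper does.
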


\begin{remark}\label{Bem-p-ex}
\smallskip

\begin{enumerate}
\item
Let $G$ be a quitting game, which satisfies the assumptions of theorem \ref{Prop0}, and $\Gamma_v$ a corresponding one-step game to $G$ with $v \in V$. Then a strategy profile $p \in [0, 1]^N$ exists, which satisfies the assumption 2 of proposition \ref{Prop1}.

\begin{proof}
That every one-step game has got an equilibrium was shown in \cite{Holler}. The proof uses Kakutani's fixed point theorem. Furthermore either $p = (0, \ldots, 0)^T$ or $p \neq (0, \ldots, 0)^T$ holds.
For the second case it is to show, that there a player $m$ with $p^m >0$ and $\gamma_v^m(p) \leq 1$ exists. Because of $p \neq (0, \ldots, 0)^T$ at least one player $m \in \mathcal{N}$ exists with $p^m > 0$.

Consider the case that $p^m = 1$. Then with the assumption 2 of theorem \ref{Prop0}
\begin{eqnarray*}
\gamma_v^m(p) & = &  \sum\limits_{S \in \mathcal{P}(\mathcal{N})} \varrho(p, S) r^m_S \\
& = & \sum\limits_{\emptyset \neq S \subseteq \mathcal{N}\setminus \{m\} } \varrho(p, S\cup\{m\}) r^m_{S \cup \{m\}} \\
& \leq & \sum\limits_{\emptyset \neq S \subseteq \mathcal{N}\setminus \{m\} } \varrho(p, S\cup\{m\}) \cdot 1 \\
& \leq & 1
\end{eqnarray*}
follows.

Consider the case $p^m \in (0, 1)$. Because $p$ is an equilibrium in $\Gamma_v$, $p$ is also (0-)perfect in $\Gamma_v$. This implies $\gamma_v^m(p^{-m}, 1) - \gamma_v^m(p^{-m}, 0)= 0$. Analogously to the case above it follows that $\gamma_v^m(p^{-m}, 1) \leq 1$ and with use of the linearity of $\gamma_v(p)$ in $p^m$ one obtains
\begin{equation}
\gamma_v^m(p) = \gamma_v^m(p^{-m}, 1) \leq 1.
\end{equation}
\end{proof}
\item   The assumptions of proposition \ref{Prop1} are basically there to allow the in remark \ref{Bem-p0} mentioned estimation below for $p = (0, \ldots, 0)$ respectively to ensure that $p^m \in (0, 1]$ in the case $p \neq (0, \ldots, 0)^T$.
\end{enumerate}
\end{remark}

\begin{secsatz}\label{Prop2}
Let $\varepsilon \in (0, 1 )$ be given. If a compact set $V$ exists such that $\psi_{\varepsilon}(v) \neq \emptyset$ for all $v \in V$, then a cyclic profile $\pi = (p_i)_{i \in \mathds{N}}$ in $G$ exists, such that for every $i \in \mathds{N}$:
\begin{enumerate}
	\item	$\pi_i = (p_j)_{i \leq j \in \mathds{N}}$ is terminating\footnote{
Let $G = (\mathcal{N}, (r_S)_{\emptyset \subseteq S \subseteq \mathcal{N}})$ be a given quitting game and $\pi \in \Pi$ the chosen strategy profile. If
$\mathds{P}_{\pi} (\tau < +\infty) = 1$ the game $G$ is called terminating.
} and
	\item	$p_i$ is $(2r_{max} +2) \varepsilon$-perfect in $\gamma_{\gamma(\pi_{i+1})}$.
\end{enumerate}
\end{secsatz}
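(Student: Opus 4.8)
The plan is to recast the construction of the cyclic profile as the search for a finite \emph{exact cycle} of the correspondence $\psi_\varepsilon$, and to manufacture such a cycle from an approximate one by a contraction argument. The bridge is the dynamic‑programming identity obtained by conditioning a profile on its first stage, namely $\gamma(\pi_i) = \gamma_{\gamma(\pi_{i+1})}(p_i)$, which follows from the representation $\gamma_v(p) = \varrho(p,\emptyset)\,v + \sum_{S \in \mathcal{P}(\mathcal{N})}\varrho(p,S)\,r_S$. A profile of period $k$ therefore corresponds to values $w_0,\dots,w_{k-1}$ (indices mod $k$) and profiles $\tilde q_0,\dots,\tilde q_{k-1}$ with $w_{l+1} = \gamma_{w_l}(\tilde q_l)$ cyclically, where each $\tilde q_l$ is supplied by the definition of $\psi_\varepsilon$ (so $\tilde q_l$ is $2\varepsilon r_{max}$-perfect in $\Gamma_{w_l}$ and $\varrho(\tilde q_l,\emptyset)\le 1-\varepsilon$). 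Given such a cycle, both conclusions are quick: since every block satisfies $\varrho(\tilde q_l,\emptyset)\le 1-\varepsilon$, the non‑termination probability up to stage $m$ is at most $(1-\varepsilon)^m\to 0$ for every subgame, giving $\mathds{P}_\pi(\tau<\infty)=1$ and hence property~1; and the continuation values are pinned down because $v\mapsto\gamma_v(p)$ is a contraction in $v$ with factor $\varrho(p,\emptyset)\le 1-\varepsilon$ (by the representation above), so the cyclic recursion has a unique bounded solution, which must coincide with the true values $\gamma(\pi_i)$.

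First I would produce an orbit of $\psi_\varepsilon$. Fixing $u_0\in V$ and using $\psi_\varepsilon(u_j)\neq\emptyset$, I select $u_{j+1}\in\psi_\varepsilon(u_j)\subseteq V$ together with a witnessing profile $q_j$, for all $j\ge 0$; the whole orbit stays in the compact set $V$. Covering $V\subseteq[-2r_{max},2r_{max}]^N$ by finitely many balls of radius $\rho$ (to be fixed below) and applying the pigeonhole principle, I obtain indices $j<j'$ with $\|u_j-u_{j'}\|\le 2\rho$. Setting $k:=j'-j$, $w_l:=u_{j+l}$ and $\tilde q_l:=q_{j+l}$ gives an \emph{approximate} cycle: $w_{l+1}=\gamma_{w_l}(\tilde q_l)$ holds exactly for $l=0,\dots,k-2$, and the only defect is at the seam, $\|w_0-\gamma_{w_{k-1}}(\tilde q_{k-1})\|=\|u_j-u_{j'}\|\le 2\rho$.

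The heart of the argument is to turn this near‑cycle into an exact one without altering the profiles. Consider the return map $F$ obtained by setting $w_0:=x$, iterating $w_{l+1}:=\gamma_{w_l}(\tilde q_l)$, and outputting $w_k$. As a composition of the contractions $v\mapsto\gamma_v(\tilde q_l)$, the map $F$ is a contraction with factor at most $(1-\varepsilon)^k<1$, so by the Banach fixed point theorem it has a unique fixed point $w_0^\ast$, yielding an exact $k$-cycle $w_0^\ast,\dots,w_{k-1}^\ast$ that \emph{uses the same profiles} $\tilde q_l$. Since $F(u_j)=u_{j'}$ and $\|F(u_j)-u_j\|\le 2\rho$, the fixed point obeys $\|w_0^\ast-u_j\|\le 2\rho/(1-(1-\varepsilon)^k)\le 2\rho/\varepsilon$, and the same bound propagates to $\|w_l^\ast-w_l\|$ for every $l$ because the iterating maps only contract. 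The profiles $\tilde q_l$ were $2\varepsilon r_{max}$-perfect against the continuation $w_l$; moving the continuation to $w_l^\ast$ shifts each difference $\gamma_v^n((\tilde q_l^{-n},1))-\gamma_v^n((\tilde q_l^{-n},0))$ by at most $\|w_l^\ast-w_l\|$, since $\gamma_v((\tilde q_l^{-n},1))$ is independent of $v$ and $\gamma_v((\tilde q_l^{-n},0))$ is $1$-Lipschitz in $v$. Choosing $\rho\le\varepsilon^2$ makes this degradation $\le 2\varepsilon$, so by Remark~\ref{Bem2} each $\tilde q_l$ is $(2\varepsilon r_{max}+2\varepsilon)$-perfect $=(2r_{max}+2)\varepsilon$-perfect in $\Gamma_{w_l^\ast}$. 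Reading the cycle in the time‑reversed direction defines the periodic profile $\pi$, whose true continuation values are exactly the $w_l^\ast$, and the two conclusions follow as in the first paragraph.

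The hard part is precisely the middle step: an \emph{exactly} periodic profile is required, yet an infinite $\psi_\varepsilon$-orbit in the compact, non‑convex set $V$ need not be periodic, and the values of $\psi_\varepsilon$ need not be convex, so Kakutani‑type fixed‑point arguments are unavailable. The device above circumvents this by exploiting that $\varrho(p,\emptyset)\le 1-\varepsilon$ turns the seam‑closing return map into a genuine contraction, which simultaneously guarantees a fixed point and keeps the value shift of order $\rho/\varepsilon$. The remaining work is routine: verifying that $\psi_\varepsilon$-selections exist all along the orbit, and the careful bookkeeping of constants that matches the slack exactly to $(2r_{max}+2)\varepsilon$.
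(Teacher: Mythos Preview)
The paper does not contain a proof of this proposition. As announced in the abstract and at the start of Section~3.2, the proof of Theorem~\ref{Prop0} is split into Propositions~\ref{Prop1}, \ref{Prop2} and~\ref{Prop3}, but ``this paper concentrates only on the first one''; only Proposition~\ref{Prop1} is proved here (Section~3.2.1), while Propositions~\ref{Prop2} and~\ref{Prop3} are merely quoted from Solan and Vieille~\cite{SolVie-QG}. There is therefore no in-paper proof to compare your attempt against.

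That said, your argument is a sound and self-contained route to the statement. The key observations you use are all correct: the affine identity $\gamma_v(p)=\varrho(p,\emptyset)\,v+\sum_S\varrho(p,S)r_S$ makes $v\mapsto\gamma_v(p)$ a contraction with factor $\varrho(p,\emptyset)\le 1-\varepsilon$; a pigeonhole on a finite $\rho$-cover of the compact $V$ yields a near-cycle of some length $k$; the $k$-fold return map is then a contraction with factor $(1-\varepsilon)^k$, so Banach's theorem closes the cycle exactly while keeping the same stage profiles $\tilde q_l$; and the shift $\|w_l^\ast-w_l\|\le 2\rho/\varepsilon$ degrades the $2\varepsilon r_{max}$-perfectness by at most $2\rho/\varepsilon\le 2\varepsilon$ once $\rho\le\varepsilon^2$, since $\gamma_v^n((\tilde q_l^{-n},1))$ is independent of $v$ and $\gamma_v^n((\tilde q_l^{-n},0))$ is $1$-Lipschitz in $v$. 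The time-reversal you note is the right way to read $u_{j+1}\in\psi_\varepsilon(u_j)$ as a dynamic-programming recursion, and the uniqueness of bounded solutions to the cyclic recursion pins down $\gamma(\pi_i)=w_l^\ast$, from which both conclusions follow. One small point worth making explicit: the fixed point $w_0^\ast$ need not lie in $V$, but nothing in the conclusion of Proposition~\ref{Prop2} requires this; only the perfectness and the termination conditions are asserted, and both are established by your argument.
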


\begin{secsatz}\label{Prop3}
Let $\pi = (p_i)_{i \in \mathds{N}}$ be a strategy profile in $G$. Assume that the following properties hold for every $i \in \mathds{N}$:
\begin{enumerate}
	\item	$ \pi_i = (p_j)_{i \leq j \in \mathds{N}}$ is terminating and
	\item	$ p_i$ is $\varepsilon$-perfect in $\gamma_{\gamma(\pi_{i+1})}$.
\end{enumerate}
Then either $\pi$ is a subgame $\varepsilon^{\frac{1}{6}}$-equilibrium, or there is a stationary $\varepsilon^{\frac{1}{6}}$-equilibrium.
\end{secsatz}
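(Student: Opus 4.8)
The plan is to reduce the whole question to a single scalar recursion for each player's \emph{deviation gain} and then to split into two cases according to whether this gain ever becomes large.

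\textbf{Step 1 (payoff recursion and the gain).} First I would record the one-stage decomposition of the subgame payoffs. Since $\pi_i$ plays $p_i$ at stage $i$ and then continues with $\pi_{i+1}$, conditioning on the stage-$i$ coalition gives
\[
\gamma(\pi_i) = \varrho(p_i,\emptyset)\,\gamma(\pi_{i+1}) + \sum_{\emptyset\neq S\subseteq\mathcal{N}}\varrho(p_i,S)\,r_S = \gamma_{\gamma(\pi_{i+1})}(p_i),
\]
so the subgame starting at $i$ is exactly the one-step game $\Gamma_{v_i}$ with continuation value $v_i := \gamma(\pi_{i+1})$, whose profile $p_i$ is assumed $\varepsilon$-perfect. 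Fix a player $n$ and let $w_i^n$ be the best-reply value of $n$ in the subgame starting at $i$ against the fixed opponents $\pi^{-n}$. By the principle of optimality and Proposition \ref{Zerlegung} (linearity lets me restrict to the pure actions \emph{quit}/\emph{continue} at each stage), $w_i^n=\max\bigl(A_i,\,C_i\bigr)$, where $A_i=\gamma_{v_i}^n((p_i^{-n},1))$ is the quit value (independent of the continuation) and $C_i=\gamma_{v_i}^n((p_i^{-n},0))+\varrho(p_i^{-n},\emptyset)\,(w_{i+1}^n-\gamma^n(\pi_{i+1}))$ is the continue value. Writing $d_i:=\gamma_{v_i}^n((p_i^{-n},1))-\gamma_{v_i}^n((p_i^{-n},0))$, $\varrho_i:=\varrho(p_i^{-n},\emptyset)$ and $g_i^n:=w_i^n-\gamma^n(\pi_i)\geq 0$, the identity $\gamma^n(\pi_i)=\gamma_{v_i}^n(p_i)=\gamma_{v_i}^n((p_i^{-n},0))+p_i^n d_i$ collapses everything to the clean scalar recursion
\[
g_i^n = \max\bigl(d_i,\ \varrho_i\,g_{i+1}^n\bigr) - p_i^n d_i .
\]

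\textbf{Step 2 (exploiting $\varepsilon$-perfectness).} By Remark \ref{Bem2}, $\varepsilon$-perfectness of $p_i$ in $\Gamma_{v_i}$ bounds $d_i$ case-wise: $d_i\le\varepsilon$ if $p_i^n=0$, $|d_i|\le\varepsilon$ if $p_i^n\in(0,1)$, and $d_i\ge-\varepsilon$ if $p_i^n=1$. A short case check on the recursion then yields two facts: if the best reply quits at stage $i$ (the maximum is attained by $d_i$) then $g_i^n\le\varepsilon$; and in all cases $g_i^n\le g_{i+1}^n+\varepsilon$, equivalently $g_{i+1}^n\ge g_i^n-\varepsilon$, so a gain can shrink by at most $\varepsilon$ per stage as $i$ increases. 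Because $g_i^n$ is precisely the largest amount player $n$ can earn over $\gamma^n(\pi_i)$ by \emph{any} deviation in the subgame, the profile $\pi$ is a subgame $\varepsilon^{1/6}$-equilibrium \emph{iff} $g_i^n\le\varepsilon^{1/6}$ for every $i$ and every $n$.

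\textbf{Step 3 (the dichotomy).} If $g_i^n\le\varepsilon^{1/6}$ for all $i,n$ we are done by the previous sentence. Otherwise fix $i^*,n^*$ with $g_{i^*}^{n^*}>\varepsilon^{1/6}$. The forward bound $g_{i+1}^{n^*}\ge g_i^{n^*}-\varepsilon$ shows the gains of player $n^*$ stay above $\tfrac12\varepsilon^{1/6}$ throughout a block of roughly $\tfrac12\varepsilon^{-5/6}$ consecutive stages, on which (being $\gg\varepsilon$) the maximum in the recursion is the continuation term, so $n^*$ strictly prefers to continue and it is the opponents' quitting that is valuable to him. On this long block the parameters $(p_i,\gamma(\pi_i))$ range over the compact set $[0,1]^N\times[-r_{max},r_{max}]^N$ (boundedness of terminating payoffs, cf. Proposition \ref{Absch-Erw.wert}); covering this set by finitely many balls of small mesh and applying the pigeonhole principle produces two stages $i<j$ of the block with $p_i\approx p_j=:p^*$ and $\gamma(\pi_i)\approx\gamma(\pi_{i+1})=:v^*$. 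The relation $\gamma(\pi_i)=\gamma_{v_i}(p_i)$ at such a nearly repeating pair gives the approximate fixed point $v^*\approx\gamma_{v^*}(p^*)$, so the \emph{stationary} profile that plays $p^*$ at every stage has value $\approx v^*$. Since $p^*$ is (close to) an $\varepsilon$-perfect profile of $\Gamma_{v^*}$, continuity of the one-step differences $p\mapsto\gamma_{v}^n((p^{-n},1))-\gamma_{v}^n((p^{-n},0))$ makes $p^*$ an $\varepsilon'$-perfect profile of $\Gamma_{v^*}$ for a slightly larger $\varepsilon'$, and Theorem \ref{Theorem-perfect-GW}(1) upgrades this to an $\varepsilon'$-equilibrium, i.e. a stationary $\varepsilon'$-equilibrium of $G$.

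\textbf{Main obstacle.} The genuine work is the stationary extraction of Step 3 together with the bookkeeping that forces the final exponent to be exactly $\tfrac16$. Each ingredient costs a power of $\varepsilon$: the block length is dictated by the per-stage loss $\varepsilon$ of Step 2, the pigeonhole mesh must be fine enough that $\gamma_{v^*}(p^*)$ genuinely approximates $v^*$, and the continuity modulus of the one-step differences controls the loss of perfectness. Most delicate is the control of the stationary deviation gain itself: for a stationary profile the recursion solves to $g^n=-p^{*n}d/(1-\varrho(p^{*-n},\emptyset))$, which (exactly as in the factor $\xi_{p^*}$ of Theorem \ref{Theorem-perfect-GW}(2)) blows up when a mixing coordinate $p^{*n}$ sits near $0$ or $1$, or when the opponents' continue-probability is near $1$; one must therefore select the stationary profile so that it terminates ($\varrho(p^*,\emptyset)<1$) and its mixing probabilities stay bounded away from the endpoints, or else absorb these factors into the exponent. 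Balancing these competing powers of $\varepsilon$ is what yields the clean threshold $\varepsilon^{1/6}$ and is the crux of the argument.
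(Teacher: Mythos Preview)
The paper does not actually contain a proof of this proposition. As the abstract and the beginning of Section~3.2 make explicit, the proof of Theorem~\ref{Prop0} is split into Propositions~\ref{Prop1}, \ref{Prop2} and \ref{Prop3}, and \emph{this paper concentrates only on the first one}. Proposition~\ref{Prop3} is merely quoted from Solan and Vieille~\cite{SolVie-QG} without proof, so there is nothing in the present paper to compare your attempt against.

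That said, a few remarks on your sketch itself. Your Steps~1 and~2 are sound and are indeed the standard way to set up the analysis: the recursion $g_i^n=\max(d_i,\varrho_i g_{i+1}^n)-p_i^n d_i$ and the consequence $g_{i+1}^n\ge g_i^n-\varepsilon$ are correct. Your Step~3, however, is only an outline, and you openly acknowledge this in the ``Main obstacle'' paragraph. Two points would need substantial work before it becomes a proof: (a) the pigeonhole/compactness extraction of a stationary profile $p^*$ with $v^*\approx\gamma_{v^*}(p^*)$ does not by itself guarantee that the \emph{stationary repetition} of $p^*$ is terminating with controlled speed, and without that the stationary gain recursion you wrote down, $g^n=-p^{*n}d/(1-\varrho(p^{*-n},\emptyset))$, may blow up; (b) the quantitative bookkeeping that produces the precise exponent $\tfrac16$ is entirely missing. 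In Solan and Vieille's original argument these issues are handled not by a pigeonhole on the full profile but by a more careful case analysis distinguishing whether some player quits with probability at least $\varepsilon^{1/3}$ at some stage (in which case a punishment-type stationary profile works) or not (in which case the subgame profile itself is shown to be an $\varepsilon^{1/6}$-equilibrium). Your compactness idea is plausible in spirit but would need a different organization to close the gap.
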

\medskip

\subsubsection{Proof of Proposition \ref{Prop1}}

\begin{proof}
Let $v \in V$ and $\varepsilon \in (0, 1)$ be arbitrary but fix. The aim is to construct a $\hat{p} \in [0, 1]^N$ with $\gamma_v(\hat{p}) \in \psi_{\varepsilon}(v)$.
\medskip

It holds that $\psi_{\varepsilon}(v) \neq \emptyset$, if a strategy profile $p \in [0, 1]^N$ in $\Gamma_v$ exists, such that
\begin{enumerate}[(i)]
    \item   $\gamma_v(p) \in V = \big\{ \tilde{v} \in [-2r_{max}, 2r_{max}]^N \ \big| \ \exists n
            \in \mathcal{N} : \tilde{v}^n \leq 1 \big\}$
    \item   $p$ is $2\varepsilon r_{max}$-perfect in $\Gamma_v$
    \item   $\varrho(p, \emptyset) \leq 1-\varepsilon$.
\end{enumerate}
Now let $p$ be an equilibrium in $\Gamma_v$ that satisfies the assumptions of the proposition\footnote{Such a probability $p \in [0,1]$ exists, c.f. remark \ref{Bem-p-ex}.}.
\smallskip

\textbf{To   (i):} \hspace{1em} If $p = (0, \ldots, 0)^T$, then $\gamma_v(p) = v \in V$ holds.
In the other case ($p \neq (0, \ldots, 0)^T$) the proposition postulated that a player $m \in \mathcal{N}$ exists such that $\gamma_v^m(p) \leq 1$. Furthermore with the definition of $V$, proposition \ref{Absch-Erw.wert} and
\begin{equation*}
\delta_v = \max \big\{ \max\limits_{n \in \mathcal{N}} |v^n|, \  r_{max} \big\} \leq \max \big\{ \max\limits_{v \in V, n \in \mathcal{N}} |v^n|, \  r_{max} \big\} = 2r_{max}
\end{equation*}
$\gamma_v(p) \in [-2r_{max}, 2r_{max}]^N$ holds.  That implies $\gamma_v(p) \in V$.
\smallskip

\textbf{To  (ii):} \quad Conclusion \ref{Folgerung-GW} implies that $p$ is even 0-perfect in
$\Gamma_v$.
\medskip

\textbf{To (iii):} \hspace{0.3em} For $p = (0, \ldots, 0)^T$, $\varrho(p, \emptyset) = 1 \nleq 1- \varepsilon$ holds and for $p \neq (0, \ldots, 0)^T$, $\varrho(p, \emptyset) < 1$ but not necessarily  $\varrho(p, \emptyset) \leq 1 - \varepsilon$.
\medskip

So $\gamma_v(p)$ is not necessarily an element of $\psi_\varepsilon(v)$.
\bigskip

Based on the given strategy profile $p$, a new profile $\hat{p} \in [0,1]^N$ like in section 3.1 for the one-step game $\Gamma_v$ will be constructed such that $\varrho(\hat{p}, \emptyset) \leq 1 - \varepsilon$ holds. Afterwards it will be shown that this profile $\hat{p}$ satisfies the conditions (i) and (ii), stated at the beginning of this proof, as well.
\medskip

First to the construction of $\hat{p}$: Fix a player $m$ with $v^m =1$ if $p = (0, \ldots, 0)^T$ or with $p^m > 0$ and $\gamma_v^m(p) \leq 1$ otherwise\footnote{Let $p = (0, \ldots, 0)^T$ be the given equilibrium in $\Gamma_v$. Since $v \in V$, a player $m \in \mathcal{N}$ with $v^m \leq 1$ exists. Because $p$ is an equilibrium in $\Gamma_v$, $v^m = 1$ follows. Assume that $v^m < 1$, then player $m$ could change for the better, if he chooses to play \emph{quit} with certainty, hence $r^m_{\{m\}} = 1$ (c.f. assumption 1. of proposition \ref{Prop1}). This is a contradiction to $p$ is an equilibrium.} and set $\hat{p}$ like in (\ref{def-p-dach}), that means
\begin{equation*}
\hat{p}\,^n = \begin{cases}
            (1 - \varepsilon) \cdot p^n + \varepsilon & \mbox{ for $n = m$}\\
            p^n & \mbox{ for $n \neq m$}
            \end{cases}.
\end{equation*}

Theorem \ref{1} 1. implies
\begin{equation*}
\varrho(\hat{p}, \emptyset)  =  (1- \varepsilon) \cdot \varrho(p, \emptyset)
 \leq  1 - \varepsilon.
\end{equation*}

\textbf{To (i):} \hspace{1em} It will be shown that $\gamma_v(\hat{p}) \in V = \big\{ \tilde{v} \in [-2r_{max}, 2r_{max}]^N \ \big| \ \exists n
            \in \mathcal{N} : \tilde{v}^n \leq 1 \big\}$.

With proposition \ref{Absch-Erw.wert} and $\delta_v \leq 2r_{max}$,  $\gamma_v(\hat{p}) \in [- 2r_{max}, 2r_{max}]^N$ follows.
Consider the chosen player $m \in \mathcal{N}$. Because $p$ is an equilibrium in $\Gamma_v$
\begin{equation*}
\gamma_v^m(p) \geq \gamma_v^m\big((p^{-m},\hat{p}\,^m)\big) = \gamma_v^m(\hat{p})
\end{equation*}
holds and with the special choice of player $m$
\begin{equation}\label{toshow-1}
1\geq \gamma_v^m(p) \geq \gamma_v^m(\hat{p})
\end{equation}
follows. So $\gamma_v(\hat{p}) \in V$.
\smallskip

\textbf{To (ii):} \quad It is to show that $\hat{p}$ is $2\varepsilon r_{max}$-perfect in $\Gamma_v$.
\smallskip

Case 1: \quad $p^m \in (0,1]$
\smallskip

With theorem \ref{1} 4. and $p$ $(0-)$perfect in $\Gamma_v$ it follows immediately, that $\hat{p}$ is $2\varepsilon r_{max}$-perfect in $\Gamma_v$.
\medskip

Case 2: \quad $p^m = 0$
\smallskip

(a) \quad Consider player $m$. Because $p$ is an equilibrium in $\Gamma_v$, $p$ is also (0-)perfect in $\Gamma_v$. With this and $p^m = 0$
\begin{eqnarray*}
\gamma_v^m\big((\hat{p}^{-m}, 1)\big) - \gamma_v^m\big((\hat{p}^{-m}, 0)\big) & = & \gamma_v^m\big((p^{-m}, 1)\big) - \gamma_v^m\big((p^{-m}, 0)\big) \\
& = & r^m_{\{m\}} - v^m \\
& = & 1 -1 = 0 \in [-0, +0]
\end{eqnarray*}
follows.

(b) \quad Consider player $n \in \mathcal{N}$. With theorem \ref{1} 4. and remark \ref{Bem-p0}
\begin{equation*}
\forall n \in \mathcal{N}\setminus \{m\} :
\begin{cases}
    \gamma_v^n((\hat{p}^{-n},1)) - \gamma_v^n((\hat{p}^{-n},0)) \leq 2\varepsilon r_{max}  & \mbox{ for $\hat{p}\,^n = 0$}\\
    \gamma_v^n((\hat{p}^{-n},1)) - \gamma_v^n((\hat{p}^{-n},0)) \in  [-2\varepsilon r_{max}, 2\varepsilon r_{max}]  & \mbox{ for $\hat{p}\,^n \in (0, 1)$}\\
    \gamma_v^n((\hat{p}^{-n},1)) - \gamma_v^n((\hat{p}^{-n},0)) \geq -2\varepsilon r_{max}  & \mbox{ for $\hat{p}\,^n = 1$}
\end{cases}
\end{equation*}
holds.

Together with Case 2(a) follows that $\hat{p}$ with $p^m = 0$ is $2 \varepsilon r_{max}$-perfect in the one-step game $\Gamma_v$.
\medskip

So $\hat{p} \in \psi_{\varepsilon}(v) \neq \emptyset$ for all $v \in V$.
\end{proof}





\pagebreak

\end{document}